\documentclass[11pt]{amsart}

\usepackage{subfiles}
\usepackage{comment}
\usepackage{float}
\usepackage{marginnote}
\usepackage{tabu}
\usepackage{euscript}
\usepackage[dvipsnames]{xcolor}   		             		
\usepackage{graphicx}			
\usepackage{amssymb}
\usepackage{mathrsfs}
\usepackage{amsthm}
\usepackage{amsmath}
\usepackage{stmaryrd}
\usepackage{tikz}
\usepackage{tikz-cd}
\usepackage{accents}
\usepackage{upgreek}
\usepackage{enumerate}
\usepackage{bm}
\usepackage{mathtools}
\usepackage[all]{xy}
\usepackage{caption}
\usepackage{url}
\usepackage{float}
\usepackage{todonotes} %\setuptodonotes{color=blue!30}
\usepackage{colonequals}
\usepackage{bbm}
\usepackage{longtable}
\usepackage[full]{textcomp}
\usepackage[cal=cm]{mathalfa}
\usepackage{xparse}
\usepackage{comment}
\usepackage[noadjust]{cite} %changed for MSc Diss

\usepackage{faktor} %added by Gabriel for Universality Summer 2022
\usepackage{xfrac} %added by Gabriel for Universality Summer 2022
\usetikzlibrary{calc}
\usetikzlibrary{fadings}
\usetikzlibrary{decorations.pathmorphing}
\usetikzlibrary{decorations.pathreplacing}
\usepackage{tikz,tikz-cd,tikz-3dplot}
\usepackage{pgfplots}
\usetikzlibrary{arrows,shadows,positioning, calc, decorations.markings, 
hobby,quotes,angles,decorations.pathreplacing,intersections,shapes}
\usepgflibrary{shapes.geometric}
\usetikzlibrary{fillbetween,backgrounds}
\usetikzlibrary{arrows.meta} %added for MSc diss
\usepackage[margin=1.05in]{geometry}

\tikzset{
  commutative diagrams/.cd, 
  arrow style=tikz, 
  diagrams={>=stealth}
}
\tikzset{
  arrow/.pic={\path[tips,every arrow/.try,->,>=#1] (0,0) -- +(0,4pt);},
  pics/arrow/.default={triangle 90}
}
\tikzset{->-/.style={decoration={
  markings,
  mark=at position .6 with {\arrow{latex}}},postaction={decorate}}
  }
\tikzset{
  c/.style={every coordinate/.try}
}

\theoremstyle{definition}

\theoremstyle{definition}

  \theoremstyle{definition}

\makeatletter
\def\@tocline#1#2#3#4#5#6#7{\relax
  \ifnum #1>\c@tocdepth % then omit
  \else
    \par \addpenalty\@secpenalty\addvspace{#2}%
    \begingroup \hyphenpenalty\@M
    \@ifempty{#4}{%
      \@tempdima\csname r@tocindent\number#1\endcsname\relax
    }{%
      \@tempdima#4\relax
    }%
    \parindent\z@ \leftskip#3\relax \advance\leftskip\@tempdima\relax
    \rightskip\@pnumwidth plus4em \parfillskip-\@pnumwidth
    #5\leavevmode\hskip-\@tempdima
      \ifcase #1
       \or\or \hskip 1em \or \hskip 2em \else \hskip 3em \fi%
      #6\nobreak\relax
    \dotfill\hbox to\@pnumwidth{\@tocpagenum{#7}}\par
    \nobreak
    \endgroup
  \fi}
\makeatother

\newcounter{marginnote}
\DeclareMathAlphabet{\mathpzc}{OT1}{pzc}{m}{it}

\usepackage[backref=page]{hyperref}
\hypersetup{
  colorlinks   = true,          %Colors links instead of ugly boxes
  urlcolor     = blue!65!black,          %Color for external hyperlinks
  linkcolor    = blue!65!black,          %Color of internal links
  citecolor   = blue             %Color of citations
}

\pgfplotsset{compat=1.18}

\theoremstyle{definition}
\newtheorem{theorem}{Theorem}[section]

\newtheorem*{claim*}{Claim}

\newtheorem{lemma}[theorem]{Lemma}
\newtheorem{proposition}[theorem]{Proposition}

\newtheorem*{runningexample*}{Running example}

\newtheorem*{aside*}{Aside}

\newtheorem*{notation*}{Notation} %added by me
\newtheorem{proposition-definition}[theorem]{Proposition-Definition}
\newtheorem{theorem-definition}[theorem]{Theorem-Definition} %added for MSc Diss
\newtheorem{question}[theorem]{Question}

\newcommand{\bcd}{\begin{center}\begin{tikzcd}}
\newcommand{\ecd}{\end{tikzcd}\end{center}}

\newcommand{\Q}{\mathbb{Q}}

\begin{document}

\title{Transverse knots determined by their cyclic branched covers}
\author{Marc Kegel}
\address{Universidad de Sevilla, Dpto.\ de Álgebra,
Avda.\ Reina Mercedes s/n,
41012 Sevilla}
\email{kegelmarc87@gmail.com}
\author{Isacco Nonino}
\address{University of Glasgow,
School of Mathematics and Statistics}
\email{isacco.nonino@glasgow.ac.uk}
\date{\today }

\begin{abstract}
 Harvey--Kawamuro--Plamenevskaya demonstrated the existence of (transversely) non-isotopic transverse knots such that for every $n>1$ their $n$-fold cyclic branched covers are contactomorphic. In this short note, we construct other examples of non-isotopic transverse knots that have contactomorphic cyclic branched covers. Conversely, we prove that the transverse isotopy classes of many transverse knots are actually determined by the contactomorphism type of their cyclic branched covers.  
\end{abstract}

\keywords{Transverse knots, cyclic branched covers, twins} 

\makeatletter
\@namedef{subjclassname@2020}{%
 \textup{2020} Mathematics Subject Classification}
\makeatother%For 2020

\subjclass[2020]{53D35; 53D10, 57R65, 57M12, 57K10, 57K32} % Mathematical subject classification

% 57K10 Knot theory
% 57K14 Knot polynomials
% 57K16 Finite-type and quantum invariants, topological quantum field theories (TQFT)
% 57K32 Hyperbolic 3-manifolds
% 57M12 Low-dimensional topology of special (e.g., branched) coverings
% 57R58 Floer homology
% 57R65 Surgery and handlebodies

\maketitle

\section{Introduction}

Let $T$ be a transverse knot in the standard tight contact $3$-sphere $(S^3,\xi_{std})$.\footnote{Note that a transverse knot $T$ admits a canonical orientation by requiring that $T$ is positively transverse to the contact structure.} We denote by $\Sigma_n(T)$ the closed, oriented $3$-manifold obtained from $T$ by taking the cyclic $n$-fold branched cover of $S^3$ branched along $T$. Since $T$ is a transverse knot, $\Sigma_n(T)$ carries a preferred contact structure denoted by $\xi_n(T)$~\cite{Gonzalo,Geiges}. We refer to Section~\ref{sec:definition_covers} for details.

In this short note, we study when the contactomorphism type of the cyclic $n$-fold branched cover $(\Sigma_n(T),\xi_n(T))$ determines the isotopy class of the transverse knot $T$. More precisely, we say that a transverse knot $T$ in $(S^3,\xi_{std})$ is \textit{determined} by its $n$-fold cyclic branched cover, if whenever there exists a transverse knot $T'$ such that $(\Sigma_n(T'),\xi_n(T'))$ is contactomorphic to $(\Sigma_n(T),\xi_n(T))$ then $T'$ is transversely isotopic to $T$.

The question of whether every transverse knot is determined by one of its cyclic branched covers was studied and answered negatively in \cite{Harvey_br_covers} by demonstrating that there exist transverse knots that are not determined by any of their cyclic branched covers.

\subsection{Transverse knots determined by their cyclic branched covers}
On the opposite side of \cite{Harvey_br_covers}, it remained open if there exist transverse knots whose isotopy class is determined by the contactomorphism type of their cyclic branched covers. Our main results show that there exist several transverse knots with this property.
Adopting the language from the smooth setting, see \cite{Paoluzzi_survey} and references therein, we say that two non-isotopic transverse knots $T$ and $T'$ are \textit{transverse $n$-twins} if their $n$-fold cyclic branched covers are contactomorphic.

\begin{theorem}[Transverse knots determined by cyclic branched covers]\label{thm:transverse_knots_determined}
    Let $K$ in $S^3$ be a prime, transversely simple knot. Then for every transverse realization $T$ of $K$ in $(S^3,\xi_{std})$
    \begin{enumerate}
        \item there exist at most two odd primes $p$, such that $T$ admits a transverse $p$-twin, and
        \item for any odd prime $p$, the transverse knot $T$ admits at most finitely many transverse $p$-twins.
    \end{enumerate} 
\end{theorem}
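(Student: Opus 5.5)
The plan is to reduce the statement to the corresponding smooth result for prime knots and then use transverse simplicity to rule out transverse ambiguity within a smooth knot type. The smooth input is the classical one surveyed in \cite{Paoluzzi_survey}, due to Boileau--Paoluzzi and Kojima. For a prime knot $K$ in $S^3$ there are at most two odd primes $p$ for which $K$ admits a smooth $p$-twin, that is, a non-equivalent knot $K'$ with $\Sigma_p(K')\cong \Sigma_p(K)$. Moreover, for each odd prime $p$ there are only finitely many such $K'$ up to equivalence. (For hyperbolic $K$ and large $p$ the cover determines $K$ by the orbifold theorem and Mostow rigidity; the general prime case uses the JSJ decomposition.) I assume these results can be quoted.

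First step: suppose $T'$ is a transverse $p$-twin of $T$, and let $K'$ be the smooth knot type of $T'$. A contactomorphism $(\Sigma_p(T'),\xi_p(T'))\to(\Sigma_p(T),\xi_p(T))$ is in particular an orientation-preserving diffeomorphism $\Sigma_p(K')\cong\Sigma_p(K)$. Hence either $K'=K$, or $K'$ is a smooth $p$-twin of $K$. One subtlety needs checking. The smooth results are sometimes stated for unoriented knots up to (possibly orientation-reversing) homeomorphism. Transverse knots, however, carry a canonical orientation and live in $(S^3,\xi_{std})$, so I must track mirrors and reverses. Since $p$-fold covers of mirror knots are orientation-reversed, only the orientation-preserving equivalences matter. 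Knot reversal is harmless: the canonical orientation of $T'$ fixes it, and reversal does not change the branched cover.

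Second step: handle the case $K'=K$. Because $K$ is transversely simple, transverse realizations of $K$ are classified by their self-linking number. The self-linking number of $T$ must be recovered from $(\Sigma_p(T),\xi_p(T))$. I would use the Euler class or Hopf invariant: the $d_3$-invariant (or equivalently the homotopy class of the plane field $\xi_p(T)$) is computable from $\mathrm{sl}(T)$, since the branched cover of a transverse knot is obtained by a contact surgery/Giroux-type construction whose characteristic numbers depend linearly on $\mathrm{sl}$. Alternatively, the cover can be described as an open book whose monodromy is the $p$-fold power along a braid representative, which gives the same formula. Showing that $d_3(\xi_p(T))$ is strictly monotone in $\mathrm{sl}(T)$ then forces $\mathrm{sl}(T')=\mathrm{sl}(T)$, so $T'$ is transversely isotopic to $T$. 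Hence any transverse twin has $K'\neq K$, and part (1) follows from the smooth bound on the number of primes.

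Third step, for part (2): for a fixed odd prime $p$ the smooth types $K'$ range over a finite set. For each such $K'$, the same $d_3$-argument shows that the self-linking number of $T'$ is determined by the contact structure, provided the relevant formula is injective in $\mathrm{sl}$. Without transverse simplicity of $K'$, each $(K',\mathrm{sl})$ could still contain several transverse classes. I would bound this using finiteness of transverse knots with given type and self-linking number, which is not known in general. I expect this to be the main obstacle. I would first try to avoid it by invoking the finiteness of Legendrian/transverse representatives with fixed classical invariants where available (Colin--Giroux--Honda). Failing that, I would use the finiteness of the symmetry group of the hyperbolic or JSJ pieces of $\Sigma_p(K)$, which allows only finitely many covering involutions up to conjugacy. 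Either route yields only finitely many transverse $p$-twins.
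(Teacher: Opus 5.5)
Your argument is essentially the paper's. Transverse simplicity together with the $d_3$-formula of Lemma~\ref{lem:d3} shows that a transverse twin must be smoothly distinct: the signature term depends only on the smooth type, so $d_3$ is strictly decreasing in $\operatorname{sl}$. (The Euler class carries no information here, since it always vanishes.) Boileau--Paoluzzi then gives (1), and for (2) one fixes the smooth type and the self-linking number and invokes a finiteness theorem.

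The one thing to correct is your worry in the third step. Finiteness of transverse representatives of a given knot type with given self-linking number is known in general. The paper quotes Theorem~0.6 of \cite{Guyard}, or alternatively Theorem~0.10 of \cite{Colin_Giroux_Honda_finiteness} combined with Proposition~1.17 of \cite{etnyre_neighborhood}. So your first route closes the argument. Moreover, Boileau--Paoluzzi's sharper statement, that there is at most one smooth $p$-twin for each odd prime $p$, places all transverse $p$-twins in a single smooth type.

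Your fallback via finiteness of symmetry groups would not have worked. All transverse realizations of a fixed smooth knot $K'$ have the same smooth branched cover with smoothly conjugate deck actions. Counting smooth conjugacy classes therefore cannot separate transverse classes within one smooth type.
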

 In particular, $T$ is determined by the contactomorphism types of the cyclic $p$-fold branched covers for three different odd primes. By this, we mean that given three (odd) primes $p_1,p_2,p_3$ and two transverse knots $T,T'$ such that $(\Sigma_{p_i}(T),\xi_{p_i}(T))$ and $(\Sigma_{p_i}(T'),\xi_{p_i}(T')$ are contactomorphic for all $i=1,2,3$, then $T$ is transversely isotopic to $T'$.

For simple knot types such as torus knots and the figure-eight knot, we can strengthen the above result as follows.
\begin{theorem}[Transverse torus knots determined by cyclic branched covers]\label{thm:torus}
    Let $T$ be a transverse torus knot in $(S^3,\xi_{std})$. 
    Then for every integer $n\geq3$, $T$ admits no transverse $n$-twin.
\end{theorem}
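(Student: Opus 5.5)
Suppose $T'$ is a transverse knot with $(\Sigma_n(T'),\xi_n(T'))$ contactomorphic to $(\Sigma_n(T),\xi_n(T))$ for some $n\ge 3$. The plan is to show first that $T'$ is smoothly isotopic to $T$, and then to use that torus knots are transversely simple, so that $T'$ is determined up to transverse isotopy by its self-linking number. The argument therefore has two stages: a smooth stage using known rigidity of branched covers of torus knots, and a contact stage recovering the self-linking number from $\xi_n$.

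\textbf{Smooth stage.} Let $K$ be the smooth type of $T$, the torus knot $T_{a,b}$. Then $\Sigma_n(K)$ is the Brieskorn manifold $\Sigma(a,b,n)$, a Seifert fibered space. This is hyperbolic-free, so I would invoke the classification of knots sharing a cyclic branched cover with a torus knot. For $n\ge 3$ such a knot must again be a torus knot, since the covering involution is isotopic to a fiber-preserving one by uniqueness of Seifert fibrations, and the quotient then exhibits $K'$ as a torus knot. The Seifert invariants $(a,b,n)$ of the Brieskorn sphere, up to the ambiguity coming from unordered exceptional fibers, then determine $\{|a|,|b|\}$. Orientation is preserved because contactomorphisms preserve the contact orientation, so the sign, i.e.\ the mirror, is also determined. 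One small case needs care: $n=a$ or $n=b$ could a priori allow a swap of roles, for example $\Sigma(2,3,5)$ arising as $\Sigma_5(T_{2,3})$ or as $\Sigma_3(T_{2,5})$. This is exactly where the hypothesis on $n$ matters, so I would handle it by tracking which fiber is the branch locus. Here the contactomorphism hypothesis (the branching set is the fixed set of the deck action) is not available intrinsically, so I expect to rely on the literature, in the spirit of \cite{Paoluzzi_survey}, on twins of torus knots. I would also note that Theorem~\ref{thm:torus} restricts $T'$ to be \emph{any} transverse knot, so I need the smooth statement that torus knots have no $n$-twins for $n\ge3$.

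\textbf{Contact stage.} Knowing $K'=K$, transverse simplicity of torus knots (Etnyre) reduces the problem to showing $sl(T')=sl(T)$. I would compute a contact invariant of $\xi_n(T)$ that detects $sl$. The natural candidate is the Hopf invariant $d_3$, or equivalently the Euler class/$\Gamma$-invariant together with $d_3$. Presenting $\xi_n(T)$ via a contact $(\pm1)$-surgery diagram lifted from a Legendrian approximation of $T$, as in Section~\ref{sec:definition_covers}, I expect a formula of the form
\[
d_3(\xi_n(T)) = C_n(K) + \lambda_n\, sl(T),
\]
with $C_n(K)$ depending only on the smooth knot and $\lambda_n\neq 0$ for $n\ge 2$. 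This formula would follow from the branched-cover description as an open book: the stabilisations of $T$ lift to $n$-fold stabilisations. Since $d_3$ is a contactomorphism invariant, equality of $d_3$ forces equality of $sl$.

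\textbf{Main obstacle.} The main obstacle is the smooth stage, in particular ruling out knots $K'$ that are not torus knots yet have the same Brieskorn cover, and handling the exceptional coincidences such as $\Sigma(2,3,5)$. I would first try to cite the known theorem that torus knots are determined by their $n$-fold branched covers for $n\ge3$ among all knots. If that theorem is unavailable in full generality, I would use the equivariant Seifert fibration argument, orbifold theorem style, to reduce to torus knots and then compare Seifert invariants directly.
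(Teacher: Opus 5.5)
Your proposal is correct and is essentially the paper's proof. The paper combines the $d_3$-formula of Lemma~\ref{lem:d3}, whose $\operatorname{sl}$-coefficient $-(n-1)/2\neq 0$ is exactly your $\lambda_n$ (packaged as Lemma~\ref{lem:same_sl}), with transverse simplicity of torus knots to show that a twin would have to be smoothly distinct, and then cites the smooth result (Neumann, Paoluzzi) that torus knots have no $n$-twins for $n\geq 3$. You simply run these two stages in the opposite order. Separately, your worry about $\Sigma(2,3,5)\cong\Sigma_5(T_{2,3})\cong\Sigma_3(T_{2,5})$ is a red herring: twins must share the cover of the \emph{same} order $n$, so this coincidence needs no special handling.
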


\begin{theorem}[Transverse figure-eight knots are determined by every cyclic branched cover]\label{thm:fig_8}
    Let $T$ be a transverse figure-eight knot in $(S^3,\xi_{std})$. Then $T$ admits no transverse twin.
\end{theorem}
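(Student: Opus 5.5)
The statement to prove is Theorem~\ref{thm:fig_8}: a transverse figure-eight knot $T$ has no transverse $n$-twin for any $n\geq 2$. My plan is to reduce the contact problem to a smooth one plus a classical invariant.

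\textbf{Step 1: reduce to the smooth $n$-twin problem.} Suppose $T'$ is a transverse knot with $(\Sigma_n(T'),\xi_n(T'))$ contactomorphic to $(\Sigma_n(T),\xi_n(T))$. Forgetting the contact structures, $\Sigma_n(T')\cong\Sigma_n(4_1)$ as oriented manifolds. The figure-eight knot is hyperbolic, and $\Sigma_2(4_1)$ is the lens space $L(5,2)$. Every knot with double branched cover $L(5,2)$ is the $2$-bridge knot $4_1$ (Hodgson--Rubinstein). For $n\geq 3$, the orbifold theorem together with Mostow rigidity applies. The cover $\Sigma_n(4_1)$ is hyperbolic for $n\geq4$; for $n=3$ it is the Hantzsche--Wendt manifold, which is Euclidean. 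In both cases the knots with this cover are controlled by conjugacy classes of cyclic subgroups of the isometry group whose quotient is $S^3$. I would invoke the known classification, due to Boileau, Paoluzzi, Zimmermann and others and surveyed in~\cite{Paoluzzi_survey}: hyperbolic knots with few symmetries, in particular $4_1$, have no smooth $n$-twins. Since $4_1$ is amphichiral, either orientation of $T'$ gives the same unoriented knot type. Hence $T'$ is smoothly isotopic to the figure-eight knot, possibly after mirroring, which is harmless since $4_1$ is amphichiral.

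\textbf{Step 2: transverse simplicity.} Etnyre--Honda show that the figure-eight knot is Legendrian simple, and hence transversely simple. So transverse figure-eight knots are classified by the self-linking number $\mathrm{sl}\in\{-3,-5,-7,\dots\}$. It therefore suffices to recover $\mathrm{sl}(T)$ from $(\Sigma_n(T),\xi_n(T))$.

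\textbf{Step 3: recover the self-linking number.} I would compute a contactomorphism invariant of $\xi_n(T)$ that varies injectively with $\mathrm{sl}$. The natural candidates are the $d_3$-invariant or the Euler class/$\Gamma$-invariant, computed through the branched-cover description in Section~\ref{sec:definition_covers}. Concretely, I would realize $T$ as a braid closure, or as a stabilization of the maximal transverse representative. Each transverse stabilization lowers $\mathrm{sl}$ by $2$. On the cover, this should correspond to adding $n-1$ Legendrian surgeries or to a connected sum with a fixed overtwisted piece. I expect each stabilization to change $d_3$ by a fixed nonzero amount. This gives the formula $d_3(\xi_n(T))=a_n+b_n\,\mathrm{sl}(T)$ with $b_n\neq0$, which is presumably the lemma used in the torus-knot case as well.

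Since the cover has finite $H_1$, $d_3$ is a well-defined rational number, and this formula recovers $\mathrm{sl}$. The alternative is tightness: $\xi_n(T)$ is tight only for the maximal representative, which separates that case from the rest but not the stabilized ones from each other. The main obstacle is precisely this $d_3$ computation in the covers. If it proves delicate, I would first try a relative formula for how one stabilization of the branch locus changes the homotopy class of the lifted plane field.
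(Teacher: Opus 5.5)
Your proposal is correct and is essentially the paper's argument: the formula you anticipate in Step~3 is exactly Lemma~\ref{lem:d3}, $d_3=-\tfrac34\sum_{\omega^n=1}\sigma_\omega(T)-(n-1)\tfrac{\mathrm{sl}(T)+1}{2}$, so $b_n=-\tfrac{n-1}{2}\neq0$, and together with transverse simplicity~\cite{Etnyre_Honda} this forces any twin to be smoothly distinct from $4_1$ (Lemma~\ref{lem:same_sl}), contradicting the smooth fact that the $2$-bridge knot $4_1$ has no twins (\cite{Hodgson_Rubinstein}, cf.\ Theorem~8 of~\cite{Paoluzzi_survey}). Drop the tightness aside, which is unnecessary and in fact false here: Lemma~\ref{lem:d3} gives $e(\xi_2(T))=0$, whereas both tight structures on $\Sigma_2(4_1)=L(5,2)$ (Legendrian surgery diagrams with rotation numbers $(\pm1,0)$) have non-vanishing Euler class, so even the double branched cover of the maximal representative is overtwisted, which is also why $d_3$ rather than tightness is the right invariant.
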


The above detection results (Theorems~\ref{thm:transverse_knots_determined}--\ref{thm:fig_8}) are about prime knots. For composite knots we have the following general result.

\begin{theorem}[Composite knots determined by cyclic branched covers] \label{thm:composite}
    Let $K$ be a composite knot in $S^3$ and $T$ be a transverse realization of $K$ in $(S^3,\xi_{std})$. If for all odd primes $p$, the cyclic $p$-fold branched cover of $T$ is tight, 
    then $T$ admits a $p$-twin for at most two odd primes $p$.
\end{theorem}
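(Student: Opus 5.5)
The plan is to reduce to the smooth classification of knots by their cyclic branched covers (Kojima, Boileau--Paoluzzi), and then use tightness together with contact-geometric invariants to pin down the transverse isotopy class.

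\textbf{Step 1: reduction to the smooth setting.} Suppose $T'$ is a transverse $p$-twin of $T$ for some odd prime $p$. Then $(\Sigma_p(T'),\xi_p(T'))$ is contactomorphic to $(\Sigma_p(T),\xi_p(T))$. In particular $\Sigma_p(T')\cong\Sigma_p(T)$ as oriented manifolds, so the underlying smooth knot $K'$ of $T'$ is either a smooth $p$-twin of $K$ or equal to $K$.

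\textbf{Step 2: ruling out a nontrivial smooth twin for most $p$.} Let $K=K_1\#\cdots\#K_m$ with $m\ge2$. Two facts control when $K'\neq K$:
\begin{enumerate}
\item $\Sigma_p(K)$ is the connected sum of the $\Sigma_p(K_i)$.
\item By Kim--Tollefson/Boileau--Paoluzzi, the $p$-fold cover determines the collection of covering involution data of the summands. A composite knot can have a smooth $p$-twin only if some summand has a $p$-twin or there is a mutation-type ambiguity. For prime summands, Boileau--Paoluzzi show that only finitely many $p$ occur (at most two odd primes for the hyperbolic pieces, via the Kojima result that a knot is determined by its $p$-fold cover for large $p$).
\end{enumerate}
The expected structure is this: for all but at most two odd primes $p$, the $p$-fold cover determines the unordered collection of prime summands up to orientation issues. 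The subtlety is that summands could be reversed or mirrored; mirroring is excluded because the orientation of the cover is fixed. Reversal gives the same unoriented summand, and a transverse realization forces its orientation. The result is that $K'=K$ as oriented knots for all $p$ outside an exceptional set of size at most two.

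\textbf{Step 3: upgrading to transverse isotopy.} When $K'=K$ smoothly, we must show $T'$ is transversely isotopic to $T$. Transverse knots in a composite knot type decompose as transverse connected sums (Etnyre--Honda), so it suffices to recover the self-linking number and the transverse summands. Here the tightness hypothesis is crucial. The self-linking number is read off from the cover: the Euler class, or the $d_3$-invariant, of $\xi_p(T)$ depends on $\mathrm{sl}(T)$, e.g.\ $d_3$ is affine in $\mathrm{sl}$ with nonzero slope in $p$. This pins down $\mathrm{sl}$. Tightness, plus the prime decomposition of tight contact manifolds (Colin), splits $\xi_p(T)$ as a contact connected sum of the $\xi_p(T_i)$. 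We then recover each summand, and a version of the $p$-twin argument is applied summandwise.

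The main obstacle is exactly that transversely non-simple summands may exist, as in the Harvey--Kawamuro--Plamenevskaya twins. My first attempt is to argue only the weaker claim, that $p$-twins with $K'=K$ do not occur, by contradiction. This would use the fact that contactomorphic tight covers force equal $d_3$ and Euler class, hence equal $\mathrm{sl}$. If that does not fully determine $T$, I will fall back to showing that the exceptional primes are exactly those coming from the smooth twin count in Step 2. This would leave at most two odd primes, as claimed.
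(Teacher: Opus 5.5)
Your Step~2 is where the argument breaks. The composite statement it relies on is asserted (``the expected structure is this'') rather than proved. In the oriented form you use, it is false. By Viro, $\Sigma_n(K_1\#K_2)\cong\Sigma_n(K_1\#rK_2)$ for every $n$, where $r$ denotes orientation reversal, so no cyclic cover distinguishes these knots, which differ when $K_2$ is non-invertible. The canonical orientation of a transverse knot does not help at the smooth level, because $T'$ may simply be a transverse realization of $K_1\#rK_2$. Moreover, Boileau--Paoluzzi bound the exceptional primes of each prime knot separately. By Lemma~\ref{lem:sum}, a $p$-twin $T_i'$ of a summand $T_i$ gives the $p$-twin $T_i'\#\bigl(\#_{j\neq i}T_j\bigr)$ of $T$, so ``at most two'' for $K$ does not follow from the per-summand count.

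The paper does not try to identify $K'$ smoothly. Beyond showing that $K'$ is composite with the same number of prime summands (Lemma~\ref{lem:sum}, the Smith conjecture, and Lemma~1 of Paoluzzi's survey), it works at the contact level. It writes $T=\#_iT_i$ and $T'=\#_iT_i'$ via Etnyre--Honda. It then uses tightness, Lemma~\ref{lem:sum} and the uniqueness of prime decompositions of tight contact manifolds to get, after reordering, contactomorphisms $(\Sigma_p(T_i),\xi_p(T_i))\cong(\Sigma_p(T_i'),\xi_p(T_i'))$ for all $i$, whether or not $K'=K$. Only then does it apply the prime result, Theorem~\ref{thm:transverse_knots_determined}, to each summand. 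The summandwise splitting you reserve for Step~3 is the engine of the whole proof, and it makes Step~2 unnecessary. Note that this route also only bounds the exceptional primes of $T$ by the union of those of the $T_i$, at most two per summand; a uniform bound of two needs a further argument.

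Your Step~3 does not close either, and the obstacle you name cannot be removed by a fallback. The Euler class of $\xi_p(T)$ vanishes for every transverse $T$ (Lemma~\ref{lem:d3}), so only $d_3$ recovers $\mathrm{sl}$. After the contact splitting you are left with prime summands $T_i,T_i'$ with contactomorphic covers. To conclude, you need each prime summand to be determined by its smooth knot type and self-linking number. That is the hypothesis of Lemma~\ref{lem:same_sl} and Theorem~\ref{thm:transverse_knots_determined}, and the paper's final appeal to that theorem uses it implicitly for the $T_i$.

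This hypothesis cannot be dropped. Suppose a prime summand $T_1$ has a smoothly isotopic, transversely non-isotopic partner $T_2$ with contactomorphic tight cyclic covers. This is the Harvey--Kawamuro--Plamenevskaya phenomenon you cite; their flype examples are closures of quasipositive $3$-braids, whose covers are Stein fillable. Take $T_0$ to be the maximal self-linking transverse right-handed trefoil. Then $T_2\#T_0$ is a $p$-twin of $T_1\#T_0$ for every $p$, by Lemma~\ref{lem:sum} and the Etnyre--Honda classification, even though all covers of $T_1\#T_0$ are tight. So make this hypothesis on the summands explicit, and replace the fallback by the paper's argument: split at the contact level, then apply the prime theorem summand by summand.
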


In concrete situations, we can often strengthen this result. For example, we have the following.

\begin{theorem}[Connected sums of positive torus knots]\label{thm:fig_8_sum}
    Let $T$ be a transverse realization of a connected sum of positive torus knots in $(S^3,\xi_{std})$ with maximal self-linking number. Then $T$ admits no transverse $n$-twin for every $n\geq3$.
\end{theorem}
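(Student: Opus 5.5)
Suppose $T'$ is a transverse knot with $(\Sigma_n(T'),\xi_n(T'))$ contactomorphic to $(\Sigma_n(T),\xi_n(T))$ for some $n\geq 3$. We aim to show that $T'$ is transversely isotopic to $T$. The argument has two parts. First, a smooth rigidity step shows that $T'$ has the same knot type as $T$. Second, a contact-topological step pins down the self-linking number of $T'$. Transverse simplicity then finishes the argument.

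\textbf{Step 1: the smooth type of $T'$.} Write $K=K_1\#\cdots\#K_m$ with each $K_i$ a positive torus knot $T(p_i,q_i)$. Then $\Sigma_n(K)$ is the connected sum of the Brieskorn manifolds $\Sigma(p_i,q_i,n)$. These are Seifert fibered, and for $n\geq 3$ each is irreducible. Indeed, each summand is a nontrivial manifold unless $\{p_i,q_i,n\}$ consists of numbers that are pairwise non-coprime in a degenerate way, and the case $\Sigma(2,3,5)$ is still nontrivial. By uniqueness of the prime decomposition, $\Sigma_n(T')$ has the same prime summands.

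We then use the equivariant sphere theorem together with the orbifold theorem. The covering involution of $\Sigma_n(K')$ respects an equivariant prime decomposition. Hence $K'$ splits as a connected sum $K_1'\#\cdots\#K_m'$, possibly with some summands whose $n$-fold covers are $S^3$; by the Smith conjecture those summands are unknots. Each remaining summand satisfies $\Sigma_n(K_j')\cong\Sigma(p_j,q_j,n)$.

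For torus knots and $n\geq 3$, the $n$-fold cyclic branched cover determines the knot. This follows from the uniqueness of Seifert fibrations, which identifies the $\mathbb{Z}/n$ action with the one coming from the Brieskorn description, combined with the classification of cyclic actions on Seifert manifolds (cf.\ the results used in the proof of Theorem~\ref{thm:torus}). Thus $K'$ is isotopic to $K$, up to mirror and orientation. The mirror is excluded because orientation is preserved: $\Sigma_n(K)$ bounds a Milnor fiber and is not orientation-reversing homeomorphic to its mirror, or more simply the contact structure distinguishes them.

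\textbf{Step 2: the self-linking number.} Connected sums of positive torus knots are transversely simple, since they are fibered and strongly quasipositive, or alternatively by Etnyre--Honda's results on connected sums together with the simplicity of torus knots. So it suffices to show $sl(T')=sl(T)=\overline{sl}(K)=2g(K)-1$. The contact structure $\xi_n(T)$ at maximal $sl$ is Stein fillable; it is the Milnor fiber filling of the link of $x^{p}+y^{q}+z^n$ (summed). By contrast, if $sl(T')<\overline{sl}(K)$, then $T'$ is a stabilization, and the branched cover of a stabilized transverse knot is overtwisted. This is the standard fact that the lift of the stabilization disc produces an overtwisted disc, used in the earlier theorems.

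Since contactomorphism preserves tightness, $T'$ is nonstabilized. Being transversely simple, $T'$ therefore has maximal $sl$ and is isotopic to $T$.

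\textbf{Main obstacle.} The main obstacle is Step 1, namely the equivariant decomposition and the uniqueness of cyclic actions on Brieskorn spheres for all $n\geq 3$. I would first try to quote the known result that prime summands of knots are recovered from $n$-fold covers when $n\geq 3$ and the covers are hyperbolic or Seifert, via Boileau--Zimmermann type theorems on $\pi$-orbifolds. Dealing with the possible appearance of extra knots that are twins of torus knots is exactly where the torus-knot result (Theorem~\ref{thm:torus}) is invoked summand by summand.
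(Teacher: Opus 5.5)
Your argument is correct, but it runs in the opposite order from the paper's.

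\textbf{The paper's route.} It stays in the contact category throughout:
\begin{enumerate}
\item It shows $\xi_n(T)$ is tight: the summands are closures of quasipositive braids, so Harvey--Kawamuro--Plamenevskaya applies, and Colin's theorem handles the connected sum.
\item It writes $T'=\#_i T'_i$ using the Etnyre--Honda classification of transverse connected sums.
\item It uses uniqueness of prime decompositions of \emph{tight} contact manifolds to get $(\Sigma_n(T_i),\xi_n(T_i))\cong(\Sigma_n(T'_i),\xi_n(T'_i))$ after reordering.
\item It applies Theorem~\ref{thm:torus} to each pair and reassembles via uniqueness of transverse prime decompositions.
\end{enumerate}

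\textbf{Your route.} You match summands smoothly, using Kneser--Milnor plus primality of cyclic branched covers of prime knots. You then apply the same smooth no-twin result for torus knots that underlies Theorem~\ref{thm:torus}, conclude $K'\cong K$, and only then pin down $\operatorname{sl}(T')$.

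\textbf{What each buys.} Your route avoids the contact prime decomposition and any transverse decomposition of $T'$. It also makes your Step~2 superfluous. Once $K'\cong K$, Lemma~\ref{lem:same_sl} (equal signatures plus the $d_3$-formula of Lemma~\ref{lem:d3}) already forces $\operatorname{sl}(T')=\operatorname{sl}(T)$, and transverse simplicity finishes without any appeal to tightness. So your argument actually works for every transverse realization of $K$, not only the one with maximal self-linking number. The paper needs that hypothesis precisely because its summand matching requires tightness. In exchange, its summand-by-summand step uses only a purely transverse input: each $T_i$ has no transverse $n$-twin. Yours needs each summand to be determined \emph{smoothly} by its cover, and the composite type to be transversely simple.

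\textbf{Slips to repair.}
\begin{enumerate}
\item Being fibered and strongly quasipositive only gives uniqueness of the maximal-$\operatorname{sl}$ representative (Etnyre--Van Horn-Morris). It does not give that every non-maximal representative is a stabilization, which is what your Step~2 uses. Rely on Etnyre--Honda together with transverse simplicity of torus knots instead.
\item Your irreducibility discussion of $\Sigma(p,q,n)$ is garbled. All you need is that cyclic branched covers of nontrivial prime knots are prime and not $S^3$ (equivariant sphere theorem and the Smith conjecture), which holds for every $n\geq2$.
\item Excluding the mirror of $K$ as a whole does not exclude mirroring only some summands. Do it summand by summand: the Kneser--Milnor matching is orientation-preserving, so the oriented smooth torus-knot result handles each factor.
\item The mechanism ``the lift of the stabilization disc is overtwisted'' is not justified as stated. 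Cite Lemma~\ref{lem:stabilized} instead, although with the $d_3$ shortcut you no longer need it.
\end{enumerate}
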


\subsection{Transverse knots with contactomorphic cyclic branched covers}

The transverse knots that share all cyclic branched covers from \cite{Harvey_br_covers} are all smoothly isotopic -- albeit not transversely so. In Theorem~\ref{thm:Nakanishi_Sakuma}, we adopt a construction method of Nakanishi~\cite{nakanishi} and Sakuma \cite{sakuma} to demonstrate the existence of transverse knots that are smoothly non-isotopic but  share a cyclic branched cover.

\begin{theorem}[Smoothly non-isotopic transverse knots that share a cyclic branched cover] \label{thm:Nakanishi_Sakuma}
    For every $n\geq 2$, there exist hyperbolic, transverse knots $T_1$ and $T_2$ in $(S^3,\xi_{std})$, such that
    \begin{enumerate}
        \item $T_1$ and $T_2$ are smoothly non-isotopic, but
        \item their $n$-fold cyclic branched covers are contactomorphic.
    \end{enumerate}
\end{theorem}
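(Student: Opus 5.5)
We adapt the Nakanishi--Sakuma construction to the transverse setting. Fix $n\geq 2$ and start with a hyperbolic two-component link $L=K\cup O$ in $S^3$ with $O$ an unknot and $\mathrm{lk}(K,O)$ coprime to $n$. Since $O$ is unknotted, $K$ lifts to a knot $\widetilde K$ in the $n$-fold cyclic branched cover of $S^3$ along $O$, which is again $S^3$. Symmetrically, we use a link $L$ with a symmetry exchanging the roles (Sakuma's strongly invertible or "$2$-bridge type" links with both components unknotted). Concretely, take $L=O_1\cup O_2$ with both components unknots, $\mathrm{lk}(O_1,O_2)$ coprime to $n$, and $L$ hyperbolic. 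Set $K_1$ to be the preimage of $O_2$ in $\Sigma_n(O_1)\cong S^3$ and $K_2$ the preimage of $O_1$ in $\Sigma_n(O_2)\cong S^3$. Then $\Sigma_n(K_1)\cong\Sigma_n(K_2)$, since both are the $\mathbb Z_n\oplus\mathbb Z_n$ branched cover of $S^3$ along $L$ associated to the map sending both meridians to generators. For suitable choices, e.g.\ the Nakanishi--Sakuma examples built from twisted links, $K_1$ and $K_2$ are hyperbolic and smoothly non-isotopic. We distinguish them by a hyperbolic invariant such as volume or symmetry group, or by the Alexander polynomial computed from $L$.

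To make this contact, we realize $L$ as a transverse link $T_{O_1}\cup T_{O_2}$ in $(S^3,\xi_{std})$ where each component is a transverse unknot with self-linking $-1$. The contact branched cover of $(S^3,\xi_{std})$ along a transverse unknot with $sl=-1$ is again $(S^3,\xi_{std})$. This holds because that unknot is the binding of the open book with disk pages supporting $\xi_{std}$, and the branched cover is supported by the $n$-fold cover of the open book, which is again the trivial disk open book. Under this cover, the other component $T_{O_2}$ lifts to a transverse knot $T_1$, because the covering map is a local contactomorphism away from the branch locus (after the standard modification near it). Likewise $T_{O_1}$ lifts to $T_2$. The main claim is then that the $\mathbb Z_n\oplus\mathbb Z_n$ contact branched cover along $L$ can be computed in either order. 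Here we rely on the construction of Section~\ref{sec:definition_covers}: the contact structure on a branched cover is well defined up to contactomorphism, and iterated branched covers along disjoint transverse components commute, since both are obtained as the unique (up to isotopy) contact structure pulled back from $\xi_{std}$ and smoothed near the preimage of $L$. Consequently $(\Sigma_n(T_1),\xi_n(T_1))\cong(\Sigma_n(T_2),\xi_n(T_2))$.

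The main obstacle is the transverse realizability step. We need a single transverse realization of $L$ in which both components are simultaneously transverse unknots of maximal self-linking $-1$, so that both intermediate covers are $(S^3,\xi_{std})$. Our first attempt is to take the braid closure: choose $L$ so that it is the closure of a braid in which $O_1$ is the braid axis. We then arrange that $O_2$ is a braid about $O_1$ that is itself a one-strand-like unknot in a transverse position. A cleaner approach is to require $L$ to be a positive/quasipositive braid closure where each component is an unknotted sub-braid with index one more than writhe, so that Bennequin's equality gives $sl=-1$. The Nakanishi--Sakuma examples (with twisting parameters chosen positive) can be drawn this way. The second, smaller, issue is checking well-definedness of the commuting covers in the contact category. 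We handle it by noting that both sides are contact structures on the same manifold that agree with the pullback of $\xi_{std}$ away from a neighborhood of the preimage of $L$. Near that preimage both are given by the same local model, so they are isotopic.
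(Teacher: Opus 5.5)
Your architecture matches the paper's. You take two transverse unknots with $\operatorname{sl}=-1$ and linking number prime to $n$, lift one in the cover along the other (which is $(S^3,\xi_{std})$), and identify both $n$-fold covers with the $\mathbb{Z}/n\times\mathbb{Z}/n$ contact branched cover of the link. Your open book argument for the cover along the $\operatorname{sl}=-1$ unknot, and your local-model argument that the two iterated covers agree, are fine and more explicit than the paper's ``by construction''.

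The gap is that the content of the theorem is exhibiting \emph{one specific link} with three properties at once:
\begin{enumerate}
\item[(i)] it has a transverse realization in which both components are $\operatorname{sl}=-1$ unknots;
\item[(ii)] its linking number is prime to $n$;
\item[(iii)] its two lifts are smoothly non-isotopic.
\end{enumerate}
You never fix such a link. Step (i), which you call the main obstacle, is only asserted (``the Nakanishi--Sakuma examples \dots\ can be drawn this way''). Step (iii) is left to ``suitable choices'' and to invariants (volume, symmetry group) that you do not compute and that would have to be controlled for every $n\geq 2$.

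Your suggestion to use a link with a symmetry exchanging the components, such as a $2$-bridge link, defeats (iii). Let $h$ be an orientation-preserving diffeomorphism of $S^3$ with $h(O_1)=O_2$ and $h(O_2)=O_1$. It lifts to a diffeomorphism $\Sigma_n(O_1)\to\Sigma_n(O_2)$ carrying $K_1$ onto $K_2$, so the two knots are isotopic. Every two-component $2$-bridge link has such an $h$: the rotation by $\pi$ swapping the two closing arcs of the rational tangle, composed with an isotopy of the tangle rel boundary. The paper itself notes that one expects distinct lifts only when $L$ has no such symmetry. You therefore need an asymmetric link together with an actual invariant computation.

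The paper supplies both. Its explicit Legendrian front is a $T(2,4)$ with $\operatorname{tb}=-1$ components plus a band on $L_1$ linking $L_2$ once; the positive transverse push-off gives two $\operatorname{sl}=-1$ unknots with $\operatorname{lk}=1$, which handles (i) and (ii) for all $n$. For (iii) it uses Murasugi's formula
\[
\Delta_{T_1}(t)\doteq\prod_{i=1}^{n-1}\Delta_U(t,\omega^i),\qquad \Delta_{T_2}(t)\doteq\prod_{i=1}^{n-1}\Delta_U(\omega^i,t).
\]
Here $\Delta_U(x,y)$ has $x$-degree $6$ with extreme terms $x^6$ and $y^2$, but $y$-degree only $2$. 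This gives breadth $6(n-1)$ for $\Delta_{T_1}$ versus at most $2(n-1)$ for $\Delta_{T_2}$, distinguishing the lifts uniformly for every $n\geq 2$. Without an example of this kind, your proposal reduces the theorem to an unverified existence claim.
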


By analyzing the homotopical invariants of a contact structure, we also prove the following, which is a slightly different result than \cite{Harvey_br_covers}.

\begin{theorem}[Overtwisted cyclic branched covers] \label{thm:OT}
Let $T_1$ and $T_2$ be transverse knots in $(S^3,\xi_{std})$ that are smoothly isotopic and have the same self-linking numbers. Then for every $n\geq 2$, the $n$-fold cyclic branched covers are homotopic as $2$-plane fields. In particular, their $n$-fold cyclic branched covers are contactomorphic if they are overtwisted.
\end{theorem}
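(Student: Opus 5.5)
We prove Theorem~\ref{thm:OT} by reducing the question to the homotopy classification of oriented $2$-plane fields on $\Sigma_n(K)$, and then using Eliashberg's classification of overtwisted contact structures. Write $K$ for the common smooth knot type. Since $T_1$ and $T_2$ are smoothly isotopic, an ambient isotopy carries $T_2$ onto a transverse knot $T_2'$, transversely isotopic to $T_2$, that is $C^0$-close (in fact $C^\infty$-close after the standard approximation) to $T_1$ as a smooth knot. Both branched covers are then $\Sigma_n(K)$, and $\xi_n(T_1)$, $\xi_n(T_2)$ are two contact structures on this one manifold. They agree up to isotopy away from a neighborhood of the branch locus, because the construction of $\xi_n(T)$ in Section~\ref{sec:definition_covers} only modifies the pulled-back structure near the preimage of $T$.

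To compare them as plane fields I would use a standard model near $T$. A transverse knot has a standard neighborhood $S^1\times D^2$ with $\xi=\ker(d\theta + r^2 d\phi)$. By Eliashberg--Fraser-type arguments, or simply by the uniqueness of transverse representatives of the same smooth knot type up to stabilization, $T_1$ and $T_2$ become transversely isotopic after sufficiently many negative stabilizations. Equality of self-linking numbers then forces the same number of stabilizations on each side, since each negative stabilization lowers $sl$ by $2$. By the transverse Markov theorem, this applies to $T_1$ and $T_2$ themselves. The problem therefore reduces to the key local claim: if $T^-$ is the stabilization of $T$, then $\xi_n(T^-)$ and the $\xi_n$ of the stabilization of $T_2$ are related in a controlled way. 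Concretely, I would show that a transverse stabilization changes $\xi_n(T)$ only inside a ball in $\Sigma_n$, and in a homotopically universal way, meaning it amounts to a fixed modification depending only on $n$.

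For that step I would argue as follows. A stabilization is supported in a Darboux ball $B$ meeting $T$ in an arc. The preimage of $B$ in $\Sigma_n$ is the $n$-fold cover of $B$ branched along a trivial arc, which is again a ball. So $\xi_n(T)$ and $\xi_n(T^-)$ agree outside a ball and differ inside by a local model independent of $T$. Two plane fields agreeing outside a ball are homotopic rel boundary precisely when their Hopf invariants, i.e.\ their $d_3$ values, match. The $2$-dimensional invariant (the spin$^c$ structure / Euler class) is automatically unchanged because the ball carries no $H^2$ relative to its boundary. Hence the homotopy class of $\xi_n(T)$ is determined by $K$, $n$, and the number of stabilizations. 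Applying this along the sequences from $T_1$ and from $T_2$ to a common stabilization, with equal counts guaranteed by equal $sl$, shows that $\xi_n(T_1)$ and $\xi_n(T_2)$ are homotopic. One can double-check the $d_3$ bookkeeping by computing $d_3(\xi_n(T))$ via a formula linear in $sl(T)$ and depending otherwise only on $K$ and $n$.

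The main obstacle is justifying that the local ball modification has a fixed, knot-independent effect on the homotopy class. I would handle it by realizing the stabilization as a model in the local chart $(\mathbb{R}^3,\ker(dz+x\,dy))$ and pulling back explicitly. The final statement is then immediate: homotopic overtwisted contact structures are isotopic by Eliashberg, hence contactomorphic.
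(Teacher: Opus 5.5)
Your argument is correct in substance, but it takes a genuinely different route from the paper. The paper never compares $\xi_n(T_1)$ and $\xi_n(T_2)$ directly. It computes Gompf's two invariants: $d_3$ from the signature formula of Lemma~\ref{lem:d3}, which depends only on the smooth type and $\operatorname{sl}$, and $\Gamma$ from Lemma~\ref{lem:Gamma}, which tracks the empty characteristic sublink of the Harvey--Kawamuro--Plamenevskaya surgery diagrams through Markov moves and gives $\Gamma=0$. It then applies Gompf's classification and Eliashberg's theorem.

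You instead pass to a common stabilization and show that each negative stabilization acts on the homotopy class of $\xi_n$ by one fixed element $m_n\in\pi_3(S^2)$ supported in a ball. Your route needs no computation and no appeal to Gompf's classification. The paper's route produces explicit invariants, and its $d_3$ formula is reused elsewhere (Lemmas~\ref{lem:same_sl} and~\ref{lem:S3}). The negative-stabilization step in the paper's proof of Lemma~\ref{lem:Gamma}, which adds an unlinked overtwisted diagram of $S^3$, is exactly your universal ball modification written out explicitly.

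To make your version airtight:
\begin{enumerate}
\item[(i)] Cite the right input: the Fuchs--Tabachnikov theorem (or classical plus transverse Markov), which says smoothly isotopic transverse knots become transversely isotopic after enough negative stabilizations. This is not an Eliashberg--Fraser argument. Equal $\operatorname{sl}$ then forces the same number $k$ of stabilizations on both sides.
\item[(ii)] The ``main obstacle'' you identify does not exist, and no explicit pull-back computation is needed. Knot-independence is automatic because the local model (Darboux ball, standard arc, stabilized arc) is the same for every $T$. More simply, $T^-=T\#U$ with $U$ the transverse unknot with $\operatorname{sl}=-3$, so Lemma~\ref{lem:sum} gives $\xi_n(T^-)\cong\xi_n(T)\#\xi_n(U)$, where $\xi_n(U)$ is a fixed plane field on $S^3$.
\item[(iii)] State the final cancellation explicitly. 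The $\pi_3(S^2)$-action on homotopy classes of plane fields is a group action and commutes with orientation-preserving diffeomorphisms. Let $\Psi$ be the diffeomorphism induced by the transverse isotopy of the common stabilizations. From $\Psi^*\xi_n(T_2)+k m_n\simeq\xi_n(T_1)+k m_n$ you may then subtract $k m_n$. You never need to identify $m_n$ with a $d_3$-change, and the Euler class remark is superfluous.
\end{enumerate}

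Finally, drop your first paragraph. A transverse knot $C^\infty$-close to $T_1$ is transversely isotopic to $T_1$, so that claim would prove too much. The assertion that $\xi_n(T_1)$ and $\xi_n(T_2)$ agree away from the branch locus is also unjustified. Nothing later in your argument depends on either.
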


\subsection{Questions and further remarks}
We end this introduction by stating some obvious questions arising from our results.

 \begin{question}
     Is there a transversely non-simple knot $K$ such that every transverse realization $T$ of $K$ is determined by one of its cyclic branched covers?
 \end{question}

 \begin{question}
     Do there exist non-isotopic transverse knots, with the same underlying smooth knot type and self-linking numbers, such that for some $n\geq2$ their cyclic $n$-fold branched covers are not contactomorphic? 
 \end{question}

 \begin{question}
     Is there a transversely universal knot that is determined by one of its cyclic branched covers?
 \end{question}

 Note that currently it seems to be unknown if there exists a transversely universal knot whose underlying smooth knot is transversely simple \cite{Rodriguez-Viorato,Zapata,Casals_etnyre,Ng_Ozsvath_Thurston,Birman_Menasco,Etnyre_torus_knots,Etynre_honda_Cabling,Vertesi_Ng_etynre}.

\begin{question}
    Is every transverse torus knot determined by the contactomorphism type of its double branched cover?
\end{question}

\begin{question}
    Does there exist an infinite family of pairwise non-isotopic transverse knots that all have contactomorphic $n$-fold cyclic branched covers?
\end{question}

\subsection*{Conventions}

Throughout this paper, we assume the reader to be familiar with cyclic branched covers and contact topology on the level of \cite{Geiges,Paoluzzi_survey}. For the background on branched covers along transverse knots, we refer to \cite{Plamenevskaya_DBC}.

We work in the smooth category. All manifolds, maps, and ancillary objects are assumed to be smooth. We assume all $3$-manifolds to be connected closed oriented, and all contact structures to be positive and co-oriented. 

We adopt the convention that the $n$-fold cyclic branched covering is the map $f\colon \Sigma_n(K)\rightarrow S^3$ while we denote by \textit{branched cover} the oriented diffeomorphism type of the $3$-manifold $\Sigma_n(K)$. 

Legendrian and transverse knots in $(S^3,\xi_{std})$ are always presented in their front projection and transverse knots are always assumed to be oriented such that they are positively transverse to the contact structure. We will write $\operatorname{tb}(L)$ and $\operatorname{rot}(L)$ for the Thurston--Bennequin invariant and the rotation number of an oriented Legendrian knot $L$ and $\operatorname{sl}(T)$ for a transverse knot $T$. We also normalize the $d_3$-invariant such that $d_3(S^3,\xi_{std})=0$, following conventions used for example in \cite{EKS_contact_surgery_numbers,Kegel_Onaran,Casals_Etnyre_Kegel,ChatterjeeKegel,KegelYadav,surgery_distance}. Note that this differs from the original sources~\cite{Gompf_Stein,DingGeigesStipsicz_surgery}.

\subsection*{Acknowledgements}
We are happy to thank Luisa Paoluzzi for a beautiful series of lectures at the \textit{Workshop on Branched Covers, Sevilla, November 17–21, 2025,} and for generously providing the lecture notes~\cite{Paoluzzi_survey}, which both greatly informed our understanding of cyclic branched covers and twins in the smooth setting. We thank Vincent Colin, John Etnyre, and Luisa Paoluzzi for helpful pointers to the literature and Rima Chatterjee for useful discussion. We thank Andy Wand and Ana Lecuona for helpful comments on the construction of this paper.

\subsection*{Individual grant support}
MK is supported by a Ram\'on y Cajal grant (RYC2023-043251-I) and by the project PID2024-157173NB-I00 funded by MCIN/AEI/10.13039/501100011033, ESF+ and FEDER, EU; and by a VII Plan Propio de Investigación y Transferencia (SOL2025-36103) of the University of Sevilla. IN was supported by EPSRC Studentship No. ~EP/W524359/1.

\section{Branched covers along transverse knots}\label{sec:definition_covers}
In this section, we start by describing how a cyclic branched cover along a transverse knot inherits a preferred contact structure from $(S^3,\xi_{std})$.

\begin{proposition}[Gonzalo \cite{Gonzalo}]
    Let $T$ be a transverse knot in $(S^3,\xi_{std})$. Then for every $n\geq2$ there exists a preferred contact structure $\xi_n(T)$ on the $n$-fold cyclic branched cover $\Sigma_n(T)$ branched along $T$, such that the branched covering is a local contactomorphism away from the branching set.
\end{proposition}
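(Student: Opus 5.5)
The plan is to build $\xi_n(T)$ by pulling back a contact form that has been put in a standard normal form near $T$, and then correcting the pulled-back form near the branch locus, where it degenerates.

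\emph{Step 1: normal form near $T$.} By the standard neighborhood theorem for transverse knots, $T$ has a tubular neighborhood $N \cong S^1 \times D^2_\varepsilon$, with coordinates $(\theta, r, \varphi)$, on which $\xi_{std} = \ker(d\theta + r^2\, d\varphi)$ and $T = \{r = 0\}$. Away from $N$, let $\alpha$ be any contact form for $\xi_{std}$ that agrees with $d\theta + r^2\, d\varphi$ on $N$.

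\emph{Step 2: pull back.} Let $f\colon \Sigma_n(T)\to S^3$ be the branched covering. Over $N$ it is modeled by $(\theta, \rho, \psi) \mapsto (\theta, \rho^n, n\psi)$ in polar coordinates, that is, $z \mapsto z^n$ on the disk factor. Then
\[
f^*\alpha = d\theta + n\rho^{2n}\, d\psi .
\]
Away from $\tilde T = f^{-1}(T)$ this is a contact form, since $f$ is a local diffeomorphism there. Along $\tilde T$ it fails to be contact: $d(f^*\alpha)$ restricted to the disk directions vanishes at $\rho = 0$ for $n \ge 2$. Explicitly, $f^*\alpha \wedge d f^*\alpha = 2n^2 \rho^{2n-1}\, d\theta\, d\rho\, d\psi$, which vanishes to order $2n-2$ at $\rho=0$.

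\emph{Step 3: modify near $\tilde T$.} Fix $\delta>0$ small and replace $n\rho^{2n}$ by a function $h(\rho)$ on the region $\rho \le \delta$, and consider $\beta = d\theta + h(\rho)\, d\psi$. We require:
\begin{itemize}
\item $h(\rho) = \rho^2$ near $0$, so $\beta$ is smooth and contact at the axis;
\item $h(\rho) = n\rho^{2n}$ near $\rho = \delta$;
\item $h'(\rho) > 0$ for $\rho > 0$.
\end{itemize}
The contact condition for $\beta$ is $\beta\wedge d\beta = h'(\rho)\, d\theta\, d\rho\, d\psi > 0$, which these requirements guarantee, and such an $h$ clearly exists. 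Let $\xi_n(T)$ be the kernel of $f^*\alpha$ outside the $\delta$-neighborhood and of $\beta$ inside it. Then $f$ is a local contactomorphism away from the $\delta$-neighborhood of $\tilde T$. Taking the $\delta$-neighborhoods shrinking, or using the interpolation of Step 4 to absorb the difference, makes this hold on the complement of any given neighborhood of $\tilde T$, which is the sense of ``away from the branching set.''

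\emph{Step 4: well-definedness.} This is the main obstacle: ``preferred'' requires independence of choices up to contactomorphism. The choices are $h$, $\delta$, the neighborhood $N$, and the transverse knot within its transverse isotopy class.
\begin{itemize}
\item \emph{Choice of $h$ and $\delta$.} The admissible functions $h$ form a convex set, because the conditions (positivity of $h'$ together with fixed boundary behavior) are preserved under convex combination. So any two choices are joined by a path of contact structures, and Gray stability gives an isotopy between them.
\item \emph{Choice of $N$.} Standard neighborhoods are unique up to contact isotopy fixing $T$, and such an isotopy lifts to the branched cover.
\item \emph{Transverse isotopy of $T$.} A transverse isotopy of $T$ extends to a contact isotopy of $S^3$, which lifts equivariantly to the branched cover, giving contactomorphic results.
\end{itemize}
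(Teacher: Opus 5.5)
Your proposal is correct and follows the same construction as the paper. Both take the standard neighbourhood form $d\theta + r^2\,d\varphi$, pull it back along $z\mapsto z^n$ to the degenerate form $d\theta + n\rho^{2n}\,d\psi$, and replace it near $\tilde T$ by $d\theta + h(\rho)\,d\psi$ interpolating to $\rho^2$ at the axis; your requirement $h'>0$ is exactly what makes the paper's loosely stated ``interpolation'' contact, and your reading of ``away from the branching set'' as ``outside a neighbourhood of $\tilde T$'' is the only possible one.

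The paper defers independence of choices to Plamenevskaya, whereas you sketch it via convexity of the admissible $h$, Gray stability, and lifting contactomorphisms. There, note that only the time-one map lifts smoothly across $\tilde T$, not the whole isotopy, and it does so because it is the identity in the chart coordinates near $T$.
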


\begin{proof}[Proof sketch]
    Away from the branching set $T$, the branched covering map is a local diffeomorphism, and hence we can just pull-back the standard contact structure away from the branching set. By the contact neighbourhood theorem~\cite{Geiges}, we can find a tubular neighbourhood of $T$ diffeomorphic to $S^1 \times D^2$ and local coordinates $(r,\theta,z)$ where $z$ is the angular coordinate of the $S^1$ factor given by the knot, $(r,\theta)$ are the polar coordinates of $D^2 \hookrightarrow\mathbb{R}^2$ so that in these coordinates, the contact structure can be expressed as $\xi = \ker (dz+r^2 d\theta)$ and the covering map can be written as $(r,\theta,z) \mapsto(r^n,n\theta,z)$. We can see that inside this neighbourhood, the contact form on the cover can be taken to be the kernel of the pull-back form \emph{except} along the knot, where $dz+nr^{2n}d\theta$ does not satisfy the contact condition since 
$$ nr^{2n}d\theta=n(x^2+y^2)^n\frac{(-ydx+xdy)}{x^2+y^2}=n(x^2+y^2)^{n-1}(-ydx+xdy)$$
and
$$ dz+n(x^2+y^2)^{n-1}(-ydx+xdy)$$ fails to be contact at $x=0,y=0$ given that $n >1.$
In order to fix this, we can define a new contact form inside this neighbourhood by interpolating between $dz+r^2d\theta$ and the pull-back  $dz+nr^{2n}d\theta$. One has to check this is independent of choices \cite{Plamenevskaya_DBC}.\footnote{Note that one can define these contact structures for more general branched covers \cite{Geiges,Gonzalo}. We are, however, only interested in cyclic branched covers.}
\end{proof}

\section{The homotopical invariants of a cyclic contact branched cover}

We will need to compute the homotopical invariants of the underlying tangential $2$-plane field of a contact structure. It is known that a tangential $2$-plane field $\xi$ on a rational homology sphere $M$ is, up to homotopy, completely determined by the $d_3$-invariant and Gompf's $\Gamma$-invariant~\cite{Gompf_Stein,DingGeigesStipsicz_surgery}.

Roughly speaking, the $\Gamma$-invariant is a refinement of the Euler class and encodes $\xi$ on the $2$-skeleton of $M$, while the $d_3$-invariant specifies $\xi$ on the $3$-cell.

The $d_3$-invariant takes values in $\Q$. The $\Gamma$-invariant depends on a choice of spin structure. More precisely, if $\mathfrak{s}$ is a spin structure on $M$, then Gompf~\cite{Gompf_Stein} defines an invariant
\[
\Gamma(\xi,\mathfrak{s}) \in H_1(M),
\]
which depends only on the tangential $2$-plane field $\xi$ and the spin structure $\mathfrak{s}$. Intuitively, the $\Gamma$-invariant can be viewed as a \emph{half Euler class} of $\xi$, since
\[
2\,\Gamma(\xi,\mathfrak{s}) = \operatorname{PD}\bigl(e(\xi)\bigr)
\]
for every spin structure $\mathfrak{s}$. But if $M$ has $2$-torsion in its first homology, then the $\Gamma$-invariant contains more information than the Euler class.

The $d_3$-invariant, the Euler class, and the $\Gamma$-invariant all admit explicit computational formulas from contact $(\pm 1)$-surgery diagrams. We refer to~\cite{DingGeigesStipsicz_surgery,Durst_Kegel_rot_surgery,Kegel_thesis,EKS_contact_surgery_numbers} for the precise formulas. For more details on the $\Gamma$-invariant, how we can describe a spin structure from a surgery diagram by a characteristic sublink and how this changes under Kirby moves, we refer to ~\cite{Kaplan,Gompf_Stein,Gompf_Stipsicz,EKS_contact_surgery_numbers,surgery_distance}.

The following lemma is from~\cite{Plamenevskaya_DBC,Harvey_br_covers,Ito_d3} and explains how to compute the Euler class and the $d_3$-invariant of a cyclic branched cover. 

\begin{lemma}[Euler class and $d_3$-invariant of cyclic branched covers]\label{lem:d3}
    Let $T$ in $(S^3,\xi_{std})$ be a transverse knot. Then for every $n\geq2$, 
    \begin{enumerate}
        \item the Euler class $e(\xi_n(T))$ vanishes, and
        \item the $d_3$-invariant computes as
        \begin{align*} d_3\big(\Sigma_n(T),\xi_n(T)\big)=- \frac{3}{4} \sum_{\substack{\omega^n = 1}} \sigma_\omega(T)
- (n-1)\,\frac{\mathrm{sl}(T)+1}{2}\, ,
    \end{align*}
where $\sigma_\omega(T)$ denotes the Tristram--Levine signature of the underlying smooth knot type of $T$, i.e.\ the signature of $(1-\omega)A + (1-\overline{\omega})A^{T}$, for $A$ a Seifert matrix for $T$.\qed
\end{enumerate}
\end{lemma}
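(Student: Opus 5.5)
Since Lemma~\ref{lem:d3} is quoted from the literature, the plan is to reconstruct it from a surgery/cobordism description of the branched cover, following \cite{Plamenevskaya_DBC,Harvey_br_covers,Ito_d3}. The first step is to braid $T$ (Bennequin): $T$ is transversely isotopic to the closure of a braid $\beta$ on $m$ strands with $\operatorname{sl}(T)=w(\beta)-m$, where $w(\beta)$ is the writhe. The $n$-fold cyclic branched cover of $S^3$ along a braid closure is then described by an open book. The branched cover of the disk open book of $(S^3,\xi_{std})$, branched over $m$ points on each page, has page $F$, the $n$-fold cyclic cover of $D^2$ branched at $m$ points. Its monodromy is the lift $\tilde\beta$ of $\beta$, which is a product of Dehn twists: positive ones for positive generators and negative ones for negative generators. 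Gonzalo's construction shows that this open book supports $\xi_n(T)$. This reduces both claims to computations for a Stein cobordism or Lefschetz fibration $X$ over $D^2$ with fiber $F$, whose vanishing cycles are the lifts of the braid generators, where negative twists are handled by contact $(+1)$-surgeries.

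For (1), the Euler class is computed through its rotation classes. Each vanishing cycle $c$ is a closed curve on $F$ lifting an arc between two branch points. The covering deck group acts on $F$, and each lift $c$ is invariant under the half-turn of that arc, so the rotation of $c$ with respect to the trivialization of $TF$ pulled back from $D^2$ vanishes. Since $F$ is planar-covered with a trivialization pulled back from the base, all rotation numbers vanish, so $c_1(X,J)=0$, and hence $e(\xi_n(T))=c_1|_{\partial X}=0$. An alternative route is to observe that $\xi_n(T)$ is invariant under the deck transformation and its Euler class is pulled back from the complement, where $e(\xi_{std})=0$.

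For (2), I plan to apply the formula $d_3=\tfrac14(c^2-3\sigma(X)-2\chi(X))+q$, where $q$ is the number of $(+1)$-surgeries, normalized so that $d_3(\xi_{std})=0$. Since $c=0$, only $\sigma(X)$, $\chi(X)$ and $q$ are needed. The manifold $X$ is the $n$-fold cyclic cover of $B^4$ branched over a pushed-in braided surface $S$ built from $m$ disks and $|\beta|$ bands. A Euler characteristic count gives $\chi(X)=n\chi(B^4)-(n-1)\chi(S)=n-(n-1)(m-|\beta|)$, and $q$ equals $(n-1)$ times the number of negative generators. For the signature, the Viro/Rokhlin $G$-signature theorem gives $\sigma(X)=\sum_{\omega^n=1}\sigma_\omega(T)$, with the term $\omega=1$ contributing $0$; this holds because $S$ is a Seifert-type surface in $B^4$ whose equivariant intersection forms are $(1-\omega)A+(1-\bar\omega)A^T$. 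Substituting and using $w(\beta)=|\beta|_+-|\beta|_-$ with $\operatorname{sl}=w-m$, the constant and counting terms should combine to $-(n-1)(\operatorname{sl}+1)/2$, up to sign conventions that I would fix by checking the unknot with $\operatorname{sl}=-1$, whose cover is $S^3$.

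The main obstacle is the signature identification for a surface that is only ribbon or pushed in rather than a Seifert surface, together with the bookkeeping of the $(+1)$-surgeries from negative twists, in particular getting the signs so that $\operatorname{sl}$ enters exactly as $(\operatorname{sl}+1)/2$.
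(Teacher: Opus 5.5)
The paper does not prove this lemma; it quotes it from \cite{Plamenevskaya_DBC,Harvey_br_covers,Ito_d3}. Your outline is the standard route to the formula: the lifted open book, the achiral Lefschetz fibration $X$ identified with the $n$-fold cover of $B^4$ along the pushed-in braided surface, the Ding--Geiges--Stipsicz/Etnyre--Ozbagci formula, and Viro's signature formula. The bookkeeping closes up once you fix the constant. As written, your formula gives $d_3(\xi_{std})=-\tfrac12$. In the paper's normalization it reads $d_3=\tfrac14\bigl(c^2-3\sigma(X)-2\chi(X)\bigr)+q+\tfrac12$. With $c^2=0$, $\chi(X)=n-(n-1)(m-|\beta|)$ and $q=(n-1)|\beta|_-$ you get
\[
d_3=-\tfrac34\sum_{\omega^n=1}\sigma_\omega(T)-\tfrac{n-1}{2}\bigl(1-m+|\beta|_+-|\beta|_-\bigr)=-\tfrac34\sum_{\omega^n=1}\sigma_\omega(T)-(n-1)\tfrac{\operatorname{sl}(T)+1}{2}.
\]
The obstacle you anticipate is not one. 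The braided surface of a word in the Artin generators is the pushed-in Bennequin surface given by Seifert's algorithm on the closed braid. In any case, the signature of the cyclic cover of $B^4$ branched along any properly embedded orientable surface bounded by $K$ does not depend on the surface: glue two such covers, then apply the $G$-signature theorem to the resulting cover of $S^4$ and Novikov additivity.

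The genuine gap is the vanishing of the rotation numbers. Your argument for (1) rests on it, and so, more importantly, does $c^2=0$ in (2). The frame on $F$ pulled back from $D^2$ is not a trivialization of $TF$. Near a lifted branch point $df=nw^{n-1}\,dw$, so the frame has a singularity of index $-(n-1)$ there. Every vanishing cycle $c_{i,j}=\tilde a_{i,j}\cup\tilde a_{i,j+1}$ is the union of two consecutive lifts of the arc $a_i$ joining the $i$-th and $(i+1)$-st branch points, so it runs through two such singular points. Its two push-offs have winding numbers $n-1$ and $-(n-1)$ relative to that frame, so the symmetry argument determines no rotation number.

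Your alternative route does give $e(\xi_n(T))=0$, provided you add one step. The kernel of $H^2(\Sigma_n(T))\to H^2(\Sigma_n(T)\setminus\tilde T)$ is generated by $\operatorname{PD}[\tilde T]$, and $[\tilde T]=0$ because $\tilde T$ bounds a lift of a Seifert surface. But $e=0$ does not imply $c^2=0$.

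A clean repair runs as follows:
\begin{enumerate}
\item $F$ deformation retracts onto the preimage of $a_1\cup\dots\cup a_{m-1}$, which is a chain of graphs with $n$ parallel edges between consecutive vertices.
\item Hence the curves $c_{i,j}$ ($1\le i\le m-1$, $1\le j\le n-1$), which carry all the Dehn twists, form a basis of $H_1(F)$.
\item So you can choose a trivialization of $TF$ in which all of them have rotation number $0$.
\item Changing the trivialization by $\delta\in H^1(F)$ changes the rotation cochain by the coboundary of $\delta$. Therefore the class $c\in H^2(X)$ is zero, which gives both $c^2=0$ and $e(\xi_n(T))=c|_{\partial X}=0$.
\end{enumerate}
Alternatively, cite Theorem 3.4 of \cite{Harvey_br_covers}, as the paper does in the proof of Lemma~\ref{lem:Gamma}: all surgery curves are Legendrian unknots with $\operatorname{rot}=0$.
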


For the last homotopical invariant, Gompf's $\Gamma$-invariant, we provide a computation below, saying that the $\Gamma$-invariants of the $n$-fold cyclic branched covers along smoothly isotopic transverse knots agree.

\begin{lemma}[$\Gamma$-invariant of cyclic branched covers]\label{lem:Gamma}
    Let $T_1$ and $T_2$ in $(S^3,\xi_{std})$ be transverse knots that are smoothly isotopic. Then for every $n\geq2$, there exist a spin structure $\mathfrak{s_1}$ on $\Sigma_n(T_1)$ and a spin structure $\mathfrak{s_2}$ on $\Sigma_n(T_2)$, such that
    \begin{enumerate}
        \item there exist a diffeomorphism $\Sigma_n(T_1)\rightarrow \Sigma_n(T_2)$ that maps $\mathfrak{s}_1$ to $\mathfrak{s}_2$, and 
        \item for $i=1,2$, we have $\Gamma(\xi_n(T_i),\mathfrak{s}_i)=0\in H_1(\Sigma_n(T_i))$.
\end{enumerate}
\end{lemma}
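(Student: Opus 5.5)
Since $T_1$ and $T_2$ are smoothly isotopic, there is an orientation-preserving diffeomorphism of $S^3$ carrying $T_1$ to $T_2$. It lifts to a diffeomorphism $\Phi\colon\Sigma_n(T_1)\to\Sigma_n(T_2)$ that commutes with the covering maps. The plan is therefore to construct, for each transverse knot $T$, a spin structure $\mathfrak{s}_T$ on $\Sigma_n(T)$ with two properties:
\begin{itemize}
\item[(a)] $\mathfrak{s}_T$ depends only on the smooth knot type, so that $\Phi$ carries $\mathfrak{s}_{T_1}$ to $\mathfrak{s}_{T_2}$;
\item[(b)] $\Gamma(\xi_n(T),\mathfrak{s}_T)=0$.
\end{itemize}
Concretely, I will compute $\Gamma$ from an explicit contact surgery diagram of $(\Sigma_n(T),\xi_n(T))$ and read off a spin structure whose characteristic sublink is canonical.

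\textbf{Step 1: a surgery diagram.} Present $T$ as the transverse push-off of a Legendrian $L$, or better, as the closure of a braid, by Bennequin's theorem. For a braid $T=\hat\beta$ with $\beta=\prod \sigma_{i_j}^{\epsilon_j}$ on $m$ strands, the open book perspective is the cleanest. $(S^3,\xi_{std})$ is supported by the open book with disk pages, and $T$ is braided with respect to it. The $n$-fold cover branched along $T$ is then supported by an open book whose page $F$ is the $n$-fold cyclic cover of the disk branched at $m$ points. Its monodromy is the lift of $\beta$, namely a product of Dehn twists $\tau_c^{\pm1}$ along lifts $c$ of the arcs defining the $\sigma_i$ (the standard Plamenevskaya / Harvey--Kawamuro--Plamenevskaya description). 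This gives a contact $(\pm1)$-surgery diagram: a Stein-fillable $\#^k(S^1\times S^2)$ (the open book with page $F$ and trivial monodromy) together with Legendrian realizations of the curves $c$ on pages, with contact $(-1)$ surgery for positive twists and $(+1)$ surgery for negative ones.

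\textbf{Step 2: computing $\Gamma$.} Use the formula from \cite{Gompf_Stein,DingGeigesStipsicz_surgery,EKS_contact_surgery_numbers}:
\[
\Gamma(\xi,\mathfrak{s})=\tfrac12\Bigl(\sum_i \operatorname{rot}(L_i)\,\mu_i+\sum_{L_j\in C}(\text{lk-terms})\,\mu_j\Bigr),
\]
where $C$ is the characteristic sublink encoding $\mathfrak{s}$. The key observation is that all surgery curves are lifts of arcs and sit on a page. I will choose the Legendrian realizations, for instance via the Etnyre--Ozbagci/Avdek page realization, so that every rotation number vanishes. This is plausible because the lifted curves live on a planar or branched-cover page symmetric under the deck group, and the rotation of each lift is determined by the winding of the arc in the disk, which is zero.

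With all $\operatorname{rot}(L_i)=0$, choose $\mathfrak{s}$ to be the spin structure whose characteristic sublink is determined purely by the framings and linking matrix. The linking data are the algebraic intersection data of the lifted curves on $F$, hence depend only on $\beta$. The Euler class already vanishes by Lemma~\ref{lem:d3}, which is consistent with this. Then $\Gamma(\xi_n(T),\mathfrak{s})=\tfrac12\sum_{L_j\in C}\mu_j$ up to terms cancelling by the characteristic equation. The natural choice is the spin structure that is the lift of the unique spin structure on $S^3$, i.e. the one pulled back along the covering restricted away from $T$ and extended. I expect to show $C=\emptyset$ works in the diagram after possibly adjusting by a canonical correction, yielding $\Gamma=0$.

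\textbf{Step 3: invariance.} The lifted spin structure is canonical in a topological sense: it is induced from $S^3\setminus T$ together with the behaviour of the meridian lift, and for odd $n$ it is the unique spin structure invariant under the deck group. Hence $\Phi$ preserves it, which gives part (1).

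The main obstacle is Step 2. One has to verify carefully that the rotation numbers of the lifted Legendrian curves vanish, or at least sum to zero in $H_1$, and that the spin structure making $\Gamma=0$ coincides with the topologically canonical lift rather than depending on the braid word. If this direct verification is hard, I will instead argue via a deck-group symmetry. The deck transformation $g$ is a contactomorphism of $\xi_n(T)$, and it fixes the canonical spin structure. So $\Gamma(\xi,\mathfrak{s})$ is $g$-invariant. It also has $2\Gamma=\operatorname{PD}(e(\xi))=0$ and lies in a transfer-trivial part of $H_1$, because pushing forward to $H_1(S^3)=0$ kills it. This forces $\Gamma=0$ for $n$ odd. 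For even $n$ the general case would need the diagrammatic computation, and that is where I expect the real work to lie.
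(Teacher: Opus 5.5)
Your argument is complete only for odd $n$, where the deck-group fallback is a valid and genuinely different route. With a rotationally symmetric interpolation near the branch locus, the deck transformation $g$ is a contactomorphism of $\xi_n(T)$. For odd $n$ it fixes exactly one spin structure, and the lift $\Phi$ conjugates deck groups, so it carries the invariant spin structure to the invariant one. Since $1+t+\dots+t^{n-1}$ annihilates $H_1(\Sigma_n(T))$, the $g$-invariant class $\Gamma$ satisfies $n\Gamma=0$; together with $2\Gamma=\operatorname{PD}(e)=0$ (Lemma~\ref{lem:d3}) this gives $\Gamma=0$.

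For even $n$ this yields nothing. For $n=2$ the involution acts as $-1$ on $H_1(\Sigma_2(T))$ and fixes every element of order two, which is exactly where $\Gamma$ lives. Your Step~3 also fails as literally stated for even $n$: the pull-back of the spin structure of $S^3\setminus T$ does not extend over the branch locus. The lifted meridian $n$-fold covers the meridian, and the $n$-fold cover of the bounding spin structure on a circle is non-bounding when $n$ is even; this is already visible for the unknot with $n=2$. A canonical twist repairs this, but you would still need to show that the repaired structure is the one with $\Gamma=0$. That is precisely the step you leave open (``I expect to show $C=\emptyset$ works \dots'').

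You are missing two ideas. First, a parity observation makes Step~2 immediate. By Theorem~3.4 of \cite{Harvey_br_covers}, the braid algorithm there outputs a contact $(\pm1)$-surgery diagram of $(\Sigma_n(T),\xi_n(T))$ inside $(S^3,\xi_{std})$ whose components are all Legendrian unknots with $\operatorname{tb}=-1$ and $\operatorname{rot}=0$. The vanishing of rotation numbers is therefore a citable fact, not the page-realization heuristic you offer for a diagram in $\#^k(S^1\times S^2)$. Every smooth framing is then $0$ or $-2$, so the empty sublink is characteristic with no correction, and the surgery formula gives $\Gamma(\xi_n(T),\mathfrak{s}_\emptyset)=0$.

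Second, for part~(1) you do not need $\mathfrak{s}_\emptyset$ to be topologically canonical or preserved by the lift $\Phi$; the lemma only asks for \emph{some} diffeomorphism. The paper relates the braid words of $T_1$ and $T_2$ by Markov moves and checks that each is realized on the diagrams by Kirby moves preserving the empty characteristic sublink:
\begin{itemize}
\item braid relations and positive stabilizations leave the diagram unchanged;
\item a cancelling pair of generators gives a cancelling pair of surgery curves, removed by a handle slide and a slam dunk;
\item a negative stabilization adds a split chain of $0$-framed unknots, removed by slam dunks.
\end{itemize}
This argument works uniformly for all $n\geq 2$, including the even case your approach cannot reach.
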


\begin{proof}
    Recall that any transverse knot $T$ in $(S^3,\xi_{std})$ can be presented as the closure of a braid and conversely any braid determines a unique isotopy class of a transverse knot.
    In Section 3 of \cite{Harvey_br_covers} an algorithm is presented that takes as input a natural number $n\geq2$ and a braid word in the standard Artin generators representing a transverse knot $T$ in $(S^3,\xi_{std})$, and outputs a contact $(\pm1)$-surgery diagram $L$ of $(\Sigma_n(T),\xi_n(T))$. By Theorem 3.4 in \cite{Harvey_br_covers} every component of the Legendrian link $L$ is Legendrian unknot with $\operatorname{tb}=-1$ and $\operatorname{rot}=0$. Thus all framings of this surgery diagram measured with respect to the Seifert framings are $0$ or $-2$. In particular, all framings are even and thus the empty link $\emptyset$ is a characteristic sublink of the surgery presentation $L$, which describes a unique spin structure $\mathfrak{s}_{\emptyset}$ on $\Sigma_n(T)$, see \cite{Gompf_Stipsicz} for details.

    First we will show that this spin structure $\mathfrak{s}_\emptyset$ is independent of the input braid word. For that let $T_1$ and $T_2$ in $(S^3,\xi_{std})$ be smoothly isotopic transverse knots presented as braid words in the Artin generators. We denote the above constructed spin structures by $\mathfrak{s}_1$ and $\mathfrak{s}_2$. Since $T_1$ and $T_2$ are smoothly isotopic, the braid words of $T_1$ and $T_2$ are related by a finite sequence of braid relations and stabilizations and destabilizations.  We will check that each of this braid modification can be translated into a sequence of Kirby moves which map the empty characteristic sublink to the empty characteristic sublink.

    It follows readily from Theorem 3.4 in~\cite{Harvey_br_covers} that the commutativity and braid relation yield the same contact surgery diagrams and thus the empty characteristic sublink is mapped to itself. Introducing a canceling pair of braid generators introduces a canceling pair of Legendrian surgery curves. A canceling pair of Legendrian surgery curves can be removed smoothly by first performing a handle slide and then a slam dunk (see for example~\cite{Casals_Etnyre_Kegel,surgery_distance}) and both operations preserve the empty characteristic sublink by Lemma 3.3 in~\cite{surgery_distance}. By Corollary 3.5 in \cite{Harvey_br_covers} a positive stabilization of a braid yields equal contact $(\pm1)$-surgery diagrams and thus again the empty characteristic sublink gets mapped to itself. Finally, Corollary 3.6 in \cite{Harvey_br_covers} discusses the change of surgery descriptions under a negative stabilization: if the braid word of $T_2$ is obtained from the braid word of $T_1$ by a negative stabilization, then the contact $(\pm1)$-surgery diagram of $(\Sigma_n(T_2),\xi_n(T_2))$ is obtained from the contact $(\pm1)$-surgery diagram of $(\Sigma_n(T_1),\xi_n(T_1))$ by adding an unlinked contact $(\pm1)$-surgery 
    diagram of a certain overtwisted contact structure on $S^3$. This surgery diagram is shown in Figure 14 of \cite{Harvey_br_covers} and is smoothly given by a linear chain of $0$-framed unknots. Thus we can perform slam dunks to remove this chain link again and to describe a diffeomorphism between the two underlying smooth manifolds. By Lemma 3.3 in \cite{surgery_distance}, slam dunks preserve the empty characteristic sublink. In summary it follows that there exist a diffeomorphism $\Sigma_n(T_1)\rightarrow \Sigma_n(T_2)$ that maps $\mathfrak{s}_1$ to $\mathfrak{s}_2$.

    Finally, we note that all surgery curves have vanishing rotation number and the spin structures are represented by the empty sublink and thus use the formula from \cite{EKS_contact_surgery_numbers} to compute $\Gamma(\xi_n(T_i),\mathfrak{s}_i)=0$.
\end{proof}

With this preparation, we are ready to prove Theorem~\ref{thm:OT}.

\begin{proof}[Proof of Theorem~\ref{thm:OT}]
    Let $T_1$ and $T_2$ be smoothly isotopic transverse knots with the same self-linking number. By Lemma~\ref{lem:Gamma} and~\ref{lem:d3}, for any integer $n\geq2$ their $n$-fold cyclic branched covers have the same $\Gamma$-invariant and $d_3$-invariant. Thus \cite{Gompf_Stein} implies that the underlying $2$-plane fields are homotopic. Eliashberg's classification of overtwisted contact structures~\cite{Eliashberg_OT} implies that if the contact structures are overtwisted, they are also isotopic. 
\end{proof}

\section{Branched covers and connected sums}

In this section, we study the cyclic $n$-fold branched covers of a composite transverse knot. The main technical lemma is the following analog of a formula in the smooth setting~\cite{Viro1,Viro2}.

\begin{lemma}[Cyclic branched covers of connected sums]\label{lem:sum}
    Let $T_1$ and $T_2$ be transverse knots in $(S^3,\xi_{std})$. Then for every $n\geq2$ the cyclic branched cover $(\Sigma_n(T_1 \# T_2),\xi_n(T_1 \# T_2))$ is contactomorphic to $(\Sigma_n(T_1),\xi_n(T_1)) \# (\Sigma_n(T_2),\xi_n(T_2))$.
\end{lemma}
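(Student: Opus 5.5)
We will mimic the smooth proof of the connected sum formula, taking care that every step is compatible with the contact structures. First we realize the transverse connected sum concretely. Choose points $x_i\in T_i$ and small Darboux balls $B_i\subset(S^3,\xi_{std})$ around $x_i$. We take each $B_i$ to be a standard neighbourhood in which $T_i\cap B_i$ is a straight transverse arc, for instance the $z$-axis in $(\mathbb{R}^3,\ker(dz+r^2d\theta))$, with $\partial B_i$ a convex sphere meeting $T_i$ in two points. The transverse connected sum $T_1\#T_2$ is then obtained from the contact connected sum $(S^3\setminus B_1)\cup_\partial(S^3\setminus B_2)\cong(S^3,\xi_{std})$ by gluing the two knotted arcs along their endpoints. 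This is the standard model, and the result is well defined up to transverse isotopy. The consequence we need is that $(S^3,\xi_{std},T_1\#T_2)$ contains a separating convex sphere $S$ meeting the knot transversely in two points. Cutting along $S$ and capping each side with a standard Darboux ball containing a trivial transverse arc recovers $(S^3,\xi_{std},T_i)$ for $i=1,2$.

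Next we lift to the cover. Let $f\colon\Sigma_n(T_1\#T_2)\to S^3$ be the branched covering. The sphere $S$ meets the branch locus in two points, so $S\setminus(S\cap T)$ is an annulus whose core is a meridian of $T$. Hence $f^{-1}(S)$ is a connected $n$-fold cyclic branched cover of $S^2$ over two points, which is again a sphere $\widetilde S$, and it separates $\Sigma_n(T_1\#T_2)$ into the two pieces $f^{-1}(S^3\setminus B_i)$. The lift of a standard ball containing a trivial transverse arc is the $n$-fold cyclic cover of $(B^3,\xi_{std})$ branched along the $z$-axis. We identify this lift with a standard Darboux ball, using the explicit model of Gonzalo's construction: $\ker\big(dz+(\text{interpolation of } r^2 \text{ and } nr^{2n})\,d\theta\big)$ on $B^3$ is tight, since it embeds in the corresponding $n$-fold cover of $(S^3,\xi_{std})$ along the transverse unknot $U$ with $\operatorname{sl}(U)=-1$. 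By \cite{Harvey_br_covers} that cover is $(S^3,\xi_{std})$; alternatively, one checks it directly via Lemma~\ref{lem:d3}, which gives $d_3=0$, together with tightness. A tight ball with convex boundary is standard by Eliashberg's uniqueness theorem. Consequently $\Sigma_n(T_i)$ decomposes as $f^{-1}(S^3\setminus B_i)$ union a standard Darboux ball.

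It remains to glue. Since $\xi_n$ is defined locally near the branch locus, the contact structure on $\Sigma_n(T_1\#T_2)$ restricted to $f^{-1}(S^3\setminus B_i)$ agrees, up to isotopy supported near the knot, with the restriction of $\xi_n(T_i)$. This uses the independence of choices in the interpolation, for which we cite \cite{Plamenevskaya_DBC}. The contact connected sum is by definition obtained by removing Darboux balls and gluing along their standard convex boundary spheres, so $\Sigma_n(T_1\#T_2)$ is $\Sigma_n(T_1)\#\Sigma_n(T_2)$ as contact manifolds.

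We expect the main obstacle to be making sure that $\widetilde S$ is a convex sphere with the standard characteristic foliation, that is, with a connected dividing set. The interpolation near the two branch points on $S$ could in principle affect this. We would handle it by choosing $S$ to be the boundary of a round ball in the local model $\ker(dz+r^2d\theta)$ centred on the $z$-axis, and checking directly that its lift is convex with a single dividing curve. Since any two tight balls with such boundary are contactomorphic rel boundary (Eliashberg), the gluing is then canonical.
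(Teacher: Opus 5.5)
Your proposal is correct and follows essentially the same route as the paper: lift the separating convex sphere, check that the lift is a standard convex sphere, identify the complementary balls in $\Sigma_n(T_i)$ as tight and hence standard Darboux balls, and glue. Where the paper uses a Poincar\'e--Hopf argument and Colin's decomposition, you use an explicit local-model check and Eliashberg's uniqueness rel boundary. The local check does work: for $\xi_n=\ker(dz+g(r)\,d\theta)$, the contact vector field $\frac{g(r)}{g'(r)}\partial_r+z\,\partial_z$ is transverse to the lifted sphere $\{r^{2n}+z^2=c^2\}$, with the single dividing circle $\{z=0\}$.

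One aside should be dropped: deducing that the cover along the $\operatorname{sl}=-1$ unknot is standard from $d_3=0$ \emph{together with tightness} is circular, because that tightness is what you are trying to establish. Your main justification is enough on its own, as is the observation that $dz+g(r)\,d\theta$ becomes the standard form after the radial reparametrization $\rho=\sqrt{g(r)}$.
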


\begin{proof}
Let $S$ in $(S^3,\xi_{std})$ be a standard convex $2$-sphere that yields the connected sum decomposition of $T_1\#T_2$, i.e.\ $S$ intersects $T_1\#T_2$ transversely in two points, the north pole $p_N$ and the south pole $p_S$, the complement of $S$ consists of two Darboux balls each of which contains a punctured $T_i$, and the characteristic foliation on $S$ has no closed leaves and exactly two singularities, the two poles.

By the Riemann--Hurwitz formula, $S$ lifts to the branched cover $\Sigma_n(T_1\#T_2)$ as a sphere $S'$ separating the manifold $\Sigma_n(T_1\#T_2)$ into two submanifolds that are obtained (smoothly) from $\Sigma_n(T_1)$ and $\Sigma_n(T_2)$ by removing a ball. Since the cover is cyclic, the branching set upstairs intersects $S'$ again transversely in exactly two points $p_1$ and $p_2$. By definition of the contact structure $\xi_n({T_1 \#T_2})$, away from the branching set, the contact structure is just the pull-back along the cyclic covering map. Thus, the characteristic foliation on $S'\setminus \{p_1,p_2\}$ has no singularities and no closed leaves. From the Poincaré--Hopf formula, it follows that the characteristic foliation has a sink and a source at $p_1$ and $p_2$. Thus the lifted sphere $S'$ is again a standard convex $2$-sphere with standard characteristic foliation.  

It is not hard to check that the $3$-balls $D_i^3$ in $\Sigma_n(T_i)$ bounded by the sphere $S'$ are tight, and hence by \cite{Eliashberg-20years} are standard Darboux balls. Moreover, we see by the definition of the contact structures on the cyclic branched covers that the contact structure on $\Sigma_n(T_1)\setminus D^3$ resp.\ $\Sigma_n(T_2)\setminus D^3$ induced by $\xi_n(T_1\#T_2)$ coincides with the contact structure induced by $\xi_n(T_1)$ resp.\ $\xi_n(T_2)$. 
Using Colin's connected sum decomposition \cite{Colin} we conclude that $(\Sigma_p(T_1 \#T_2),\xi(T_1\#T_2))$ is contactomorphic to $(\Sigma_n(T_1),\xi(T_1)) \# (\Sigma_n(T_2),\xi(T_2))$.
\end{proof}

\section{Transverse knots determined by their cyclic covers}
In this section, we give the proofs of the main results from the introduction, i.e.\ we prove Theorems~\ref{thm:transverse_knots_determined}--\ref{thm:fig_8_sum}. We start with some useful lemmas.

\begin{lemma}\label{lem:same_sl}
    Let $T$ in $(S^3,\xi_{std})$ be a transverse knot whose transverse isotopy class is determined by its underlying smooth knot type and its self-linking number. If $T'$ is an $n$-twin of $T$ for some integer $n\geq2$, then $T$ and $T'$ are non-isotopic \emph{as smooth knots}. 
\end{lemma}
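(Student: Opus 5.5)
We argue by contradiction. Suppose $T'$ is an $n$-twin of $T$, so $T'$ is not transversely isotopic to $T$, but $(\Sigma_n(T'),\xi_n(T'))$ is contactomorphic to $(\Sigma_n(T),\xi_n(T))$. Suppose further that $T'$ is smoothly isotopic to $T$. Since the transverse isotopy class of $T$ is determined by its smooth knot type and its self-linking number, and $T'$ has the same smooth knot type but is not transversely isotopic to $T$, we must have $\operatorname{sl}(T')\neq\operatorname{sl}(T)$. The goal is to show that this is incompatible with the contactomorphism of the covers.

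The tool is the $d_3$-invariant, which is invariant under contactomorphism; here we use that $\Sigma_n(T)$ is a rational homology sphere for a knot in $S^3$, so $d_3$ is defined. By Lemma~\ref{lem:d3},
\[
d_3\big(\Sigma_n(T),\xi_n(T)\big)=-\tfrac34\sum_{\omega^n=1}\sigma_\omega(T)-(n-1)\tfrac{\operatorname{sl}(T)+1}{2},
\]
and likewise for $T'$. The Tristram--Levine signatures depend only on the smooth knot type, so they agree for $T$ and $T'$. Equating the two $d_3$-invariants therefore gives
\[
(n-1)\bigl(\operatorname{sl}(T)-\operatorname{sl}(T')\bigr)=0.
\]
Since $n\geq2$, this forces $\operatorname{sl}(T)=\operatorname{sl}(T')$, a contradiction. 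Hence $T$ and $T'$ are not smoothly isotopic.

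There is essentially no obstacle beyond two checks. First, the $d_3$-invariant must be preserved by the contactomorphism. It is preserved by any orientation-preserving diffeomorphism carrying one plane field to the other, and contactomorphisms are orientation-preserving by the paper's conventions. Second, the signature term must be a smooth invariant, which holds because $\sigma_\omega$ is independent of the choice of Seifert matrix.
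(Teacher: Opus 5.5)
Your proof is correct and is essentially the paper's argument: assume $T$ and $T'$ are smoothly isotopic, so their Tristram--Levine signatures agree, and Lemma~\ref{lem:d3}(2) then forces $\operatorname{sl}(T)=\operatorname{sl}(T')$, which contradicts the hypothesis that $T$ is determined by its smooth type and self-linking number. One small correction: $\Sigma_n(T)$ need not be a rational homology sphere (for example, $\Sigma_6$ of the trefoil has $b_1=2$), but $d_3$ is still well defined because $e(\xi_n(T))=0$ by Lemma~\ref{lem:d3}(1).
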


\begin{proof}
We will argue by contradiction. If $T$ and $T'$ were smoothly isotopic, then in particular their signatures would agree. Since their $n$-fold cyclic branched covers are contactomorphic, we deduce from Lemma~\ref{lem:d3}(2) that their self-linking numbers are equal. But by our assumption, the transverse isotopy class of $T$ is determined by its underlying smooth knot type and its self-linking number. Consequently, $T'$ is transversely isotopic to $T$ and thus cannot be a  $n$-twin of $T$.
\end{proof}

\begin{lemma}\label{lem:unknot}
    Let $U$ be a transverse unknot in $(S^3,\xi_{std})$. Then $U$ admits no transverse twin.
\end{lemma}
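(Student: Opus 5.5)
The plan is to reduce the statement to the smooth setting, using the Smith conjecture, and then to finish with Lemma~\ref{lem:same_sl}. Let $U$ be a transverse unknot and suppose, for contradiction, that $T'$ is a transverse $n$-twin of $U$ for some $n\geq 2$. Then $(\Sigma_n(T'),\xi_n(T'))$ is contactomorphic to $(\Sigma_n(U),\xi_n(U))$. In particular $\Sigma_n(T')$ is diffeomorphic to $\Sigma_n(U)\cong S^3$, since the $n$-fold cyclic branched cover of the unknot is $S^3$.

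The first step is to show that $T'$ is smoothly an unknot. Since $\Sigma_n(T')\cong S^3$, the generator of the deck group is an orientation-preserving periodic diffeomorphism of $S^3$ of order $n$. Its fixed point set is the preimage $\widetilde{T'}$ of $T'$, which is a circle, and the quotient is $S^3$ with branch set $T'$. By the Smith conjecture (Morgan--Bass, building on Thurston and Meeks--Yau), such a periodic map is conjugate to a rotation, so its fixed circle is unknotted. Hence the quotient knot $T'$ is the unknot. I expect this to be the only substantial input. The point needing care is that the conclusion applies to the branch set downstairs and not just to the fixed set upstairs. This follows because the action is conjugate to a standard rotation, whose quotient pair is $(S^3,\text{unknot})$. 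I would cite the standard consequence of the Smith conjecture in this form: $\Sigma_n(K)\cong S^3$ for some $n\geq2$ implies that $K$ is trivial.

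The second step is to conclude transversely. By Eliashberg's classification, the unknot is transversely simple: transverse unknots are determined up to transverse isotopy by their self-linking number. So $U$ satisfies the hypothesis of Lemma~\ref{lem:same_sl}. That lemma says that any $n$-twin of $U$ must be smoothly non-isotopic to $U$. But $T'$ is smoothly an unknot, so it is smoothly isotopic to $U$, which is a contradiction.

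Alternatively, one can argue directly. Since all Tristram--Levine signatures of the unknot vanish, Lemma~\ref{lem:d3}(2) applied to the contactomorphic covers gives $(n-1)\tfrac{\operatorname{sl}(T')+1}{2}=(n-1)\tfrac{\operatorname{sl}(U)+1}{2}$. Hence $\operatorname{sl}(T')=\operatorname{sl}(U)$, and Eliashberg's classification gives that $T'$ is transversely isotopic to $U$. Either way, $U$ admits no transverse twin.
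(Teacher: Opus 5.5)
Your proof is correct and follows essentially the same route as the paper. The paper combines transverse simplicity of the unknot (Eliashberg) with Lemma~\ref{lem:same_sl}, which forces a twin to be smoothly non-isotopic to $U$, and with the Smith conjecture, which forces any knot $K$ with $\Sigma_n(K)\cong S^3$ to be smoothly trivial; you use the same two ingredients in the opposite order, and your alternative argument via Lemma~\ref{lem:d3}(2) is just the proof of Lemma~\ref{lem:same_sl} unpacked for the unknot.
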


\begin{proof}
    The unknot is transversely simple \cite{Eliashberg_transverse_unknots}, and thus $U$ is determined by its classical invariants. Lemma~\ref{lem:same_sl} implies that if $U$ admits a transverse twin $T$, then $T$ is smoothly non-isotopic to an unknot. However, the resolution of the Smith conjecture implies that the unknot admits no smooth twin~\cite{Zimmermann}, cf.\ \cite{Paoluzzi_survey}.
\end{proof}

\begin{lemma}\label{lem:S3}
    Let $T$ be a transverse knot in $(S^3,\xi_{std})$ such that for some $n\geq2$, the $n$-fold cyclic branched cover along $T$ is contactomorphic to $(S^3,\xi_{std})$. Then $T$ is isotopic to the transverse unknot with $sl(T)=-1$.
\end{lemma}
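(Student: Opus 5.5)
The plan is to pass first to the smooth setting, then use the transverse simplicity of the unknot together with the $d_3$-formula of Lemma~\ref{lem:d3}.

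\textbf{Step 1: $T$ is smoothly an unknot.} Suppose $(\Sigma_n(T),\xi_n(T))$ is contactomorphic to $(S^3,\xi_{std})$. In particular $\Sigma_n(T)$ is diffeomorphic to $S^3$. The covering transformation then gives a periodic diffeomorphism of $S^3$ of order $n$ whose fixed point set is the preimage of $T$, which is a knot. By the resolution of the Smith conjecture (as already used in the proof of Lemma~\ref{lem:unknot}, cf.~\cite{Zimmermann,Paoluzzi_survey}), this fixed point set is unknotted. Moreover, the action is conjugate to a standard rotation, so the quotient knot $T$ is smoothly the unknot.

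\textbf{Step 2: pin down the self-linking number.} Since $T$ is smoothly unknotted, all Tristram--Levine signatures $\sigma_\omega(T)$ vanish. Lemma~\ref{lem:d3}(2) then gives
\[
d_3\big(\Sigma_n(T),\xi_n(T)\big) = -(n-1)\,\frac{\operatorname{sl}(T)+1}{2}.
\]
The $d_3$-invariant is preserved under contactomorphism, and with our normalization $d_3(S^3,\xi_{std})=0$. Since $n\geq 2$, this forces $\operatorname{sl}(T)=-1$.

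\textbf{Step 3: conclude by transverse simplicity.} By Eliashberg's classification~\cite{Eliashberg_transverse_unknots}, transverse unknots are determined up to transverse isotopy by their self-linking number. Hence $T$ is transversely isotopic to the transverse unknot with $\operatorname{sl}=-1$.

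\textbf{Main obstacle.} The only step that is not a direct computation is Step 1. Deducing that the branch knot downstairs is trivial requires the full strength of the Smith conjecture: the fixed set of a periodic map of $S^3$ is unknotted, and the quotient by a standard rotation about an unknot is again $S^3$ with unknotted image. I would cite this in exactly the form used in Lemma~\ref{lem:unknot}, namely that the unknot is the only knot whose $n$-fold cyclic branched cover is $S^3$. The rest is a direct application of Lemma~\ref{lem:d3}.
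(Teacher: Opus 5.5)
Your proposal is correct and follows essentially the same route as the paper: the Smith conjecture shows $T$ is smoothly an unknot, the $d_3$-formula of Lemma~\ref{lem:d3} (with vanishing signatures) forces $\operatorname{sl}(T)=-1$, and Eliashberg's classification of transverse unknots finishes the proof. The only differences are cosmetic. You apply the $d_3$ computation before Eliashberg's classification rather than after, and you omit the paper's closing converse remark, which the statement does not require.
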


\begin{proof}
Every cyclic branched cover along an unknot is diffeomorphic to $S^3$. Conversely, the resolution of the Smith conjecture implies that $S^3$ arises only as a cyclic branched cover along the unknot~\cite{Zimmermann}, cf.\ \cite{Paoluzzi_survey}.

Thus, if $T$ is a transverse knot whose $n$-fold cyclic branched cover is contactomorphic to $(S^3,\xi_{std})$, then $T$ has to be a transverse unknot. By Eliashberg's classification of transverse unknots~\cite{Eliashberg_transverse_unknots}, transverse unknots are determined by their self-linking numbers, which take as values all negative odd numbers. 
Lemma~\ref{lem:d3} implies that the $d_3$-invariant of the $n$-fold cyclic branched cover along a transverse unknot $U$ is given by
\begin{equation*}
    d_3(\Sigma_n(U),\xi_n(U))=-(n-1) \frac{sl(U)+1}{2}.
\end{equation*}
In particular, it follows that the $d_3$-invariant vanishes if and only if the self-linking number of $U$ is $-1$. On the other hand, we know that $d_3(S^3,\xi_{std})=0$, and thus $T$ is transversely isotopic to a transverse unknot with self-linking number $-1$.

Conversely, it is straightforward to show that any $n$-fold cyclic branched cover along a transverse unknot with self-linking number $-1$ is contactomorphic to $(S^3,\xi_{std})$.
\end{proof}

For completeness, we provide a proof of the following lemma, which can, for example, also be found in~\cite{Harvey_br_covers}.

\begin{lemma}\label{lem:stabilized}
    Let $T$ be a stabilized transverse knot in $(S^3,\xi_{std})$, then for every $n\geq2$, the $n$-fold cyclic branched cover along $T$ is overtwisted.
\end{lemma}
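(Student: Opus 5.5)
The approach is to write a stabilized transverse knot as a transverse connected sum with a stabilized transverse unknot, and then use Lemma~\ref{lem:sum} to reduce to the case of the unknot. Recall that a stabilization of a transverse knot $T_0$ is, up to transverse isotopy, the connected sum $T_0 \# U_{-3}$. Here $U_{-3}$ is the transverse unknot with self-linking number $-3$, i.e.\ the transverse stabilization of the standard transverse unknot $U_{-1}$. This is clear in the braid picture: a negative stabilization of a braid is a connected sum with the $2$-braid $\sigma_1^{-1}$, whose closure is $U_{-3}$. So if $T=T_0\#U_{-3}$, Lemma~\ref{lem:sum} gives
\[
(\Sigma_n(T),\xi_n(T))\cong(\Sigma_n(T_0),\xi_n(T_0))\#(\Sigma_n(U_{-3}),\xi_n(U_{-3})).
\]
A connected sum with an overtwisted summand is overtwisted, since the overtwisted disk survives away from the ball that is removed. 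It therefore suffices to show that $(\Sigma_n(U_{-3}),\xi_n(U_{-3}))$ is overtwisted.

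Smoothly, $\Sigma_n(U_{-3})\cong S^3$. By Lemma~\ref{lem:d3},
\[
d_3\bigl(\Sigma_n(U_{-3}),\xi_n(U_{-3})\bigr)=-(n-1)\frac{-3+1}{2}=n-1\neq 0
\]
for every $n\geq 2$. By Eliashberg's uniqueness of the tight contact structure on $S^3$, every tight contact structure on $S^3$ is isotopic to $\xi_{std}$, which has $d_3=0$ in our normalization. Since $d_3\neq0$, the contact structure $\xi_n(U_{-3})$ is not tight, so it is overtwisted. This is exactly the computation already carried out in the proof of Lemma~\ref{lem:S3}: the only transverse unknot whose cover has vanishing $d_3$ is the one with $\operatorname{sl}=-1$.

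The main point needing care is the identification of a stabilization with a connected sum with $U_{-3}$ as \emph{transverse} knots, so that Lemma~\ref{lem:sum} applies. I would justify it in one of two ways. The first is via the braid description, using that transverse isotopy classes correspond to braids up to conjugation and positive Markov moves. Negative stabilization at the end of the braid is then visibly a band sum with $\sigma_1^{-1}$ across a standard convex sphere meeting the braid axis appropriately. The second is via the local model: a stabilization is a local modification inside a Darboux ball, and the standard transverse pair (Darboux ball, stabilized arc) is the punctured $(S^3,U_{-3})$. Alternatively, one can bypass the connected sum entirely by working in Harvey--Kawamuro--Plamenevskaya's surgery description, as quoted in the proof of Lemma~\ref{lem:Gamma}. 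There, a negative stabilization adds a split summand that is an overtwisted contact structure on $S^3$ (Figure 14 of \cite{Harvey_br_covers}), and a split summand that is overtwisted makes the whole manifold overtwisted.
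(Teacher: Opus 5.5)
Your proof is correct and follows the paper's argument: write $T$ as a transverse connected sum with a stabilized unknot, apply Lemma~\ref{lem:sum}, and use the $d_3$-computation from Lemma~\ref{lem:S3} together with Eliashberg's uniqueness of the tight contact structure on $S^3$ to see that the unknot summand's cover, and hence the whole cover, is overtwisted. Your extra justification of the transverse connected-sum identification (via braids or the local Darboux-ball model) fills in a step that the paper simply asserts.
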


\begin{proof}
    In the proof of Lemma~\ref{lem:S3}, we have seen that any $n$-fold cyclic branched cover along a stabilized unknot yields a contact $3$-sphere which is not contactomorphic to $(S^3,\xi_{std})$. By the classification of tight contact structures on $S^3$~\cite{Eliashberg-20years}, it follows that any $n$-fold cyclic branched cover along a stabilized unknot yields an overtwisted manifold. 

    Now let $T$ be any stabilized transverse knot. Then we can write $T$ as the connected sum of another transverse knot $T'$ with a stabilized unknot $U$. And thus by Lemma~\ref{lem:sum} it follows that the $n$-fold cyclic branched cover along $T$ is contactomorphic to
    \begin{equation*}
        (\Sigma_n(T'),\xi_n(T'))\# (\Sigma_n(U),\xi_n(U)).
    \end{equation*}
    Since $U$ is a stabilized unknot, this contact manifold is overtwisted.
\end{proof}

We prove the following generalization of Theorem~\ref{thm:transverse_knots_determined}.

\begin{theorem}
    Let $T$ be a transverse knot in $(S^3,\xi_{std})$ whose transverse isotopy class is determined by its underlying smooth knot type and its self-linking number, and whose underlying smooth knot type is prime (for example, any transverse realization of a prime, transversely simple knot). Then
    \begin{enumerate}
        \item there exist at most two odd primes $p$, such that $T$ admits a transverse $p$-twin, and
        \item for any odd prime $p$, the transverse knot $T$ admits at most finitely many transverse $p$-twins.
    \end{enumerate} 
\end{theorem}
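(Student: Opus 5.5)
The plan is to reduce the statement to known results about \emph{smooth} twins of prime knots. Suppose $T'$ is a transverse $p$-twin of $T$ for some odd prime $p$. Then $(\Sigma_p(T'),\xi_p(T'))$ is contactomorphic to $(\Sigma_p(T),\xi_p(T))$. In particular $\Sigma_p(T')$ is orientation-preservingly diffeomorphic to $\Sigma_p(T)$. Since $T$ is determined by its smooth knot type and self-linking number, Lemma~\ref{lem:same_sl} shows that the smooth knot types $K'$ of $T'$ and $K$ of $T$ are distinct. Hence $K'$ is a smooth $p$-twin of $K$.

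First I would dispose of the case where $K$ is the unknot, using Lemma~\ref{lem:unknot}, so that we may assume $K$ is a nontrivial prime knot.

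For part (1) I would invoke the smooth theorem of Kojima (and Zimmermann's refinements, cf.~\cite{Paoluzzi_survey}): a prime knot has no smooth $p$-twins for all but finitely many odd primes $p$. More precisely, I would use the sharper bound that a prime knot admits smooth $p$-twins for at most two odd primes, as established by Boileau--Paoluzzi--Zimmermann via the orbifold theorem and the JSJ/geometric decomposition of $\Sigma_p(K)$. Any transverse $p$-twin produces a smooth $p$-twin, so the set of odd primes admitting transverse twins is contained in that set, which gives (1).

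There is one subtlety: the smooth results are often stated for knots with $K'$ prime as well, or for hyperbolic $K$. I would check that $K'$ is automatically prime. If $K'=K_1\#K_2$ nontrivially, then $\Sigma_p(K')=\Sigma_p(K_1)\#\Sigma_p(K_2)$ with both summands non-$S^3$ by the Smith conjecture. Meanwhile $\Sigma_p(K)$ is irreducible for prime $K$, by the equivariant sphere theorem, which gives a contradiction.

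For part (2), fix an odd prime $p$. The smooth statement I would cite is that a prime knot has only finitely many smooth $p$-twins. This holds because, by the orbifold theorem, the $p$-twins correspond to conjugacy classes of cyclic subgroups of order $p$ of the mapping class/isometry group of $\Sigma_p(K)$ acting with the right fixed sets, and there are finitely many such (Boileau--Paoluzzi, Zimmermann). So there are finitely many candidate smooth types $K'$. For each such $K'$, the transverse realizations $T'$ with a contactomorphic cover must satisfy the $d_3$ equation from Lemma~\ref{lem:d3}(2). Since the signatures of $K'$ are fixed, this determines $\operatorname{sl}(T')$ uniquely. The remaining issue is that $K'$ need not be transversely simple, so a given $(K',\operatorname{sl})$ could a priori have infinitely many transverse realizations.

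This last point is the main obstacle. The first thing I would try is to use finiteness of transverse realizations with fixed self-linking number. If that is unavailable in general, I would instead use the finiteness of free $\mathbb{Z}/p$ actions on the cover, up to conjugacy, that preserve $\xi_p(T)$. Each transverse twin $T'$ yields a contact $\mathbb{Z}/p$ deck action on $(\Sigma_p(T),\xi_p(T))$, and conjugate contact actions give transversely isotopic quotient knots, since the quotient contact structure together with the branch locus is recovered from the action. So I would aim to bound the number of twins by the number of conjugacy classes of such actions. The smooth finiteness gives finitely many smooth conjugacy classes, and I would then need that each smooth class contains only finitely many contact classes. That is the step I am least sure of.
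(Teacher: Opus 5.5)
Your reduction and part (1) follow the paper. Lemma~\ref{lem:same_sl} turns any transverse $p$-twin into a smooth $p$-twin of the underlying prime knot, and the Boileau--Paoluzzi bound \cite{Boileau_Paoluzzi} on odd primes admitting smooth twins gives (1). Your extra checks (primality of $K'$, the unknot case) are harmless but not needed.

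For (2) your skeleton is also the paper's. There are only finitely many smooth candidates $K'$; the paper uses the sharper result from \cite{Boileau_Paoluzzi} that there is at most one smooth $p$-twin. For each fixed $K'$, the formula in Lemma~\ref{lem:d3} determines $\operatorname{sl}(T')$, because $p-1\neq 0$.

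The gap is the last step of (2), which you leave open and hedge on. What is needed is exactly the first thing you suggest, and it is an existing theorem requiring no transverse simplicity hypothesis on $K'$. For any smooth knot type and any integer $s$, there are at most finitely many transverse representatives in $(S^3,\xi_{std})$ with self-linking number $s$, up to transverse isotopy. This is Theorem 0.6 in \cite{Guyard}; alternatively, combine Theorem 0.10 in \cite{Colin_Giroux_Honda_finiteness} with Proposition 1.17 in \cite{etnyre_neighborhood}. Citing this closes the proof: finitely many $K'$, one admissible $\operatorname{sl}$ for each, and finitely many transverse knots for each such pair.

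Your fallback via contact $\mathbb{Z}/p$-actions should be discarded rather than pursued. Its central claim, that a smooth conjugacy class of such actions contains only finitely many contact conjugacy classes, is an unproved finiteness statement no easier than the one you are avoiding. The claim that the action recovers $(S^3,\xi_{std},T')$ is also not immediate. Near the fixed circle an invariant contact form looks like $dz+r^2\,d\theta$, which does not descend to a smooth form on the quotient. You would have to redo the interpolation and check that the result is independent of choices.
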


\begin{proof}
    If $T'$ is a transverse $n$-twin of $T$ for some $n\geq2$, then Lemma~\ref{lem:same_sl} implies that $T$ and $T'$ are smoothly non-isotopic.
    Thus, the underlying smooth knots $K'$ of $T'$ and $K$ of $T$ are smoothly $n$-twins. 
    But now the main result of \cite{Boileau_Paoluzzi} says that for at most two odd primes $p$, $K$ has a $p$-twin and thus $T$ can have, for at most two odd primes $p$, a transverse $p$-twin. 
    
    To deduce $(2)$, we use another result from \cite{Boileau_Paoluzzi} saying that for any odd prime $p$ there exists at most one smooth $p$-twin for $K$. Thus, if we fix an odd prime $p$, any two transverse $p$-twin $T'_1$ and $T'_2$ of $T$ are smoothly isotopic. Using Lemma~\ref{lem:d3}, we conclude that $T'_1$ and $T'_2$ also have the same self-linking numbers. But then Theorem 0.6 in \cite{Guyard} implies that for any given smooth knot type $K'$ and every integer $s$ there exists at most finitely many transverse representatives $T'$ in $(S^3,\xi_{std})$ of $K'$ with self-linking number $\operatorname{sl}(T')=s$.\footnote{This can also be deduced by combining Theorem 0.10 in \cite{Colin_Giroux_Honda_finiteness} and Proposition 1.17 in \cite{etnyre_neighborhood}.} 
\end{proof}

\begin{proof}[Proof of Theorem \ref{thm:torus}]
    Let $T$ be a transverse realization of a torus knot and $T'$ be an $n$-twin of $T$ for some $n\geq3$. Since torus knots are transversely simple~\cite{Etnyre_Honda}, Lemma~\ref{lem:same_sl} implies that $T'$ and $T$ are smoothly non-isotopic. But it follows from~\cite{Neumann}, cf.\ \cite{Paoluzzi_torus}, \cite{trefoil_universal} and Corollary 2 in~\cite{Paoluzzi_survey}, that a torus knot does not have a smooth $n$-twin for every $n\geq3$.
\end{proof}

 \begin{proof}[Proof of Theorem~\ref{thm:fig_8}]
 This proof follows the same logic as the proof of Theorem \ref{thm:torus}. 
    Let $T$ be a transverse realization of the figure-eight knot and $T'$ be a $2$-twin of $T$. Since the figure-eight knot is transversely simple~\cite{Etnyre_Honda}, Lemma~\ref{lem:same_sl} implies that $T'$ and $T$ are smoothly non-isotopic. On the other hand, it is a consequence of the classification of involutions of lens spaces \cite{Hodgson_Rubinstein} that a $2$-bridge knot (in particular the figure-eight knot) cannot have any twin, cf.\ Theorem 8 in~\cite{Paoluzzi_survey}.
 \end{proof}

\begin{proof}[Proof of Theorem \ref{thm:composite}]
    Let $T$ be a transverse realization of a composite knot $K$ and let $T'$ is a transverse realization of another knot $K'$. Moreover, let $n\geq2$ be such that $(\Sigma_n(T),\xi_n(T))$ is contactomorphic to $(\Sigma_n(T'),\xi_n(T'))$. Since $T$ is a composite transverse knot, Lemma \ref{lem:sum} implies that $\Sigma_n(T)$ is also not prime\footnote{Note that the resolution of the Smith conjecture implies that the no non-trivial knot admits $S^3$ as a cyclic branched cover, see for example Theorem 1 in~\cite{Paoluzzi_survey}.} and thus $\Sigma_n(T')$ is also not prime. Again, by Lemma \ref{lem:sum}, this implies that the underlying smooth knot type of $T'$ is also a composite knot. 

    Let $K=\#_{i=1}^m K_i$ and $K'=\#_{i=1}^{m'} K'_i$ be the prime decompositions of the knots $K$ and $K'$. Applying Lemma 1 of \cite{Paoluzzi_survey} yields that $m'=m$. By the classification of transverse representatives of composite knots~\cite{Etnyre_Honda_sum}, we can write $T$ as $\#_{i=1}^m T_i$, where $T_i$ is a transverse representative of $K_i$. This description is unique up to shifting stabilizations from $T_i$ to $T_j$ and permuting $T_i$ and $T_j$ if the underlying smooth knot types agree. In the same way, we can write $T'=\#_i^m T'_i$.

    Let $n\geq2$ be an integer such that the $n$-fold cyclic covers of $T$ and $T'$ are contactomorphic and tight. Then Lemma~\ref{lem:sum} and the uniqueness of the prime decomposition for tight $3$-manifolds~\cite{Geiges} implies that there exist a reordering of the $T'_i$ such that for all $i=1,\ldots,m$, the $n$-fold cyclic branched covers $(\Sigma_n(T_i),\xi_n(T_i))$ and $(\Sigma_n(T'_i),\xi_n(T'_i))$ are contactomorphic. In particular, it follows that if $T$ and $T'$ are $n$-twins, then all $T_i$ and $T'_i$ are $n$-twins as well (or isotopic). Then the statement follows from Theorem~\ref{thm:transverse_knots_determined} by using that any transverse knot admits a preferred orientation.
\end{proof}

\begin{proof}[Proof of Theorem~\ref{thm:fig_8_sum}]
Let $T=\#_{i=1}^m T_i$ be a connected sum of transverse torus knots $T_i$.
Since torus knots are transversely simple \cite{Etnyre_torus_knots}, it follows from \cite{Etnyre_Honda_sum}, cf.\ \cite{An}, that the underlying smooth knot of $T$ is also transversely simple and $T$ is obtained by connected summing the unique maximal self-linking number representatives $T_i$ of the underlying smooth torus knots. The latter transverse positive torus knots $T_i$ are closures of quasi-positive braids and thus all their cyclic branched covers are tight by \cite{Harvey_br_covers}. It follows from \cite{Colin} that all the cyclic branched covers of $T$ are also tight.

If $T$ admits a transverse $n$-twin $T'$ for some $n\geq3$. Then we deduce, as in the proof of Theorem~\ref{thm:composite}, that $T'$ is also a connected sum with the same number of prime factors as $T$, i.e.\ $T'=\#_{i=1}^m T'_i$. By the uniqueness of the prime decomposition of tight contact manifolds~\cite{Geiges}, there exists a reordering of the $T'_i$ such that $T_i$ and $T'_i$ have contactomorphic $n$-fold cyclic branched covers for all $i$. But by Theorem~\ref{thm:torus} $T_i$ has no transverse $n$-twin for $n\geq3$. Thus $T_i$ and $T'_i$ are transversely isotopic. By the uniqueness of the prime decomposition of transverse knots, we deduce that $T'$ and $T$ are isotopic. (Here we are using the fact that transverse knots admit a preferred orientation.)
\end{proof}

\section{The Nakanishi--Sakuma construction for transverse knots}
In this section we prove Theorem \ref{thm:Nakanishi_Sakuma} and construct a pair of transverse $n$-twin knots ($n\ge 2$) that are smoothly non-isotopic.
As described in \cite{Paoluzzi_survey}, the idea of the Nakanishi--Sakuma construction \cite{nakanishi,sakuma} is to take a two-component link $L $ in $ S^3$ with trivial components $L_1$, $L_2$ and $lk(L_1,L_2)=\pm1$. Let $M$ denote the $(\mathbb{Z}/n\mathbb{Z}\times \mathbb{Z}/n\mathbb{Z})$-fold branched covering branched along $L$. By construction, $M$ can be obtained as $n$-fold cyclic branched cover along two knots in $S^3$: to see this, note that since $L_1$ is an unknot, the $n$-fold cyclic branched cover $\Sigma_n(L_1)$ along $L_1$ is diffeomorphic to $S^3$. We denote by $K_2$ the knot in $S^3$ which is obtained as the preimage of $L_2$ under the branched covering map $f\colon S^3\cong \Sigma_n(L_1)\rightarrow S^3$. Note that the assumption $lk(L_1,L_2)=\pm1$ implies that $K_2$ is connected and thus a knot. Then the $n$-fold branched cyclic covering along $K_2$ is diffeomorphic to $M$. On the other hand, we can exchange the roles of $L_1$ and $L_2$ to also obtain $M$ as the $n$-fold cyclic branched cover branched along $K_1$.

Note that we do not make any claim here that $K_1$ and $K_2$ are non-isotopic. However, we would expect them to be different whenever $L$ admits no symmetry. Indeed in \cite{nakanishi,sakuma} it is shown that $K_1$ and $K_2$ are in general different.

We seek to adapt this construction to the contact setting. The backbone of the strategy is the same as in the smooth case.

\begin{proof}[Proof of Theorem~\ref{thm:Nakanishi_Sakuma}]
 We consider a positive transverse push-off $U$ of the Legendrian link $L$ in Figure~\ref{fig:contact_nakanishi_sakuma_Legendrian_form}. Note that $L$ is constructed by taking a Legendrian realization of the torus link $T(2,4)$, such that each component $L_i$ is a Legendrian unknot with Thurston--Bennequin invariant $-1$, and then adding a Legendrian band to $L_1$ that links $L_2$ exactly once.
 Thus by construction, $U$ consists of two transverse unknots $U_1$ and $U_2$ both with self-linking number $-1$ such that $lk(U_1,U_2)=1$.

 \begin{figure}[htbp]
\centering
\begin{tikzpicture}
%\draw[step=1cm,color=gray] (0,0) grid (14,6);%Uncomment this to get some helpful grid lines
\node[anchor=south west,inner sep=0] at (0,0){\includegraphics[width=9cm]{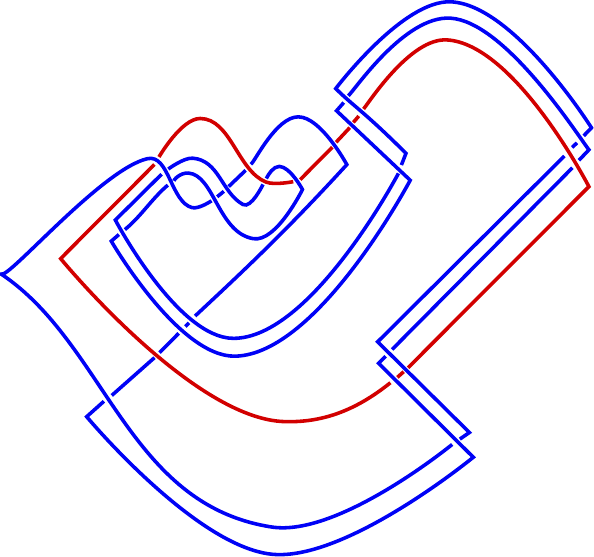}};
\end{tikzpicture}
\caption{The Legendrian realization of the above link drawn in front projection. It is evident from the picture that both unknots are standard $tb=-1$ unknots.}
\label{fig:contact_nakanishi_sakuma_Legendrian_form}
\end{figure}

 Thus by Lemma~\ref{lem:unknot}, for every $n\geq2$ the $n$-fold cyclic branched cover along $U_1$ is contactomorphic to $(S^3,\xi_{std})$. We denote by $T_2$ the preimage of $U_2$ under the contact $n$-fold cyclic branched covering
 \begin{equation*}
     f_1\colon (S^3,\xi_{std})\cong (\Sigma_n(U_1),\xi_n(U_1))\rightarrow (S^3,\xi_{std}).
 \end{equation*}
 Moreover, note that $lk(U_1,U_2)=1$, ensures that $T_2$ is indeed a transverse knot in $(S^3,\xi_{std})$. $T_1$ is obtained by reversing the roles of $U_1$ and $U_2$. 
 
By construction, the $n$-fold cyclic branched covers $(\Sigma_n(T_i),\xi_n(T_i))$ branched along the transverse knots $T_i$ are both contactomorphic to the $(\mathbb{Z}/n\mathbb{Z} \times\mathbb{Z}/n\mathbb{Z})$-fold contact branched cover over $U$. 

It remains to show that for the concrete example from Figure~\ref{fig:contact_nakanishi_sakuma_Legendrian_form} and all integers $n\geq2$, the transverse knots $T_1$ and $T_2$ are smoothly non-isotopic. For this, we demonstrate that their Alexander polynomials are different.
 Let $\Delta_{T_1}(t)$, $\Delta_{T_2}(t)$ and $\Delta_{U}(x,y)$ be the (multi-variable) Alexander polynomials of $T_1$, $T_2$ and $U$ respectively. By \cite{murasugi}, we have that
 \begin{align*}
     &\Delta_{T_1}(t)\doteq\prod_{i=1}^{n-1}\Delta_U(t,\omega^i)\\
     &\Delta_{T_2}(t)\doteq\prod_{i=1}^{n-1}\Delta_U(\omega^i,t)
 \end{align*}
  where $\omega$ is a primitive $n$-th root of unity and $\doteq$ indicates equality up to a multiplication with a unit.
  We choose orientations on $U_1$ and $U_2$ such that $lk(U_1,U_2)=1$, then we use SnapPy~\cite{SnapPy} to compute 
  $\Delta_U(x,y)$ to be 
  \begin{equation*}\label{eq:alex_polynomial_T}
     \Delta_U(x,y)= x^6+x^5y - 2x^5 - x^4y + x^3y^2-2x^4-3x^3y-2x^2y^2+x^3-x^2y-2xy^2+xy+y^2,
     %\Delta_U(x,y)= x^4y^2+x^4y+x^3y^2-x^3-x^2-x^2y^2-x^2y-xy^2+x+y+1,
  \end{equation*}
  where $x$ and $y$ correspond to the oriented meridians of $U_1$ and $U_2$.
We can isolate this polynomial with respect to both variables, and obtain 
\begin{align*}\label{align:isolated_polynomial}
   \Delta_U(x,y)= &\,x^6+x^5 (y-2)-x^4(y+2)+x^3(y^2-3y+1)-x^2(2y^2+y)+x(y-2y^2)+y^2\\
   = &\,y^2(-x^3-2x^2+2x+1)+y(x^5+x^4-3x^3+x^2+x)+(x^6+2x^5-2x^4-x^3).
\end{align*}

Since the leading term in the $x$-isolated polynomial has degree 6, we can conclude that the $\operatorname{breadth}(\Delta_{T_1}(t))$ (i.e.\ differences between highest and lowest non-trivial powers) is given by
$$\operatorname{breadth}(\Delta_{T_1}(t))=6(n-1).$$
 On the other hand, we see that $$ \operatorname{breadth} (\Delta_{T_2}(t)) \leq 2(n-1).$$
 Thus, the Alexander polynomials of $T_1$ and $T_2$ are different, and we can conclude that the two knots are smoothly non-isotopic. 
 \end{proof}

\bibliographystyle{hamsalpha}
 \bibliography{lit.bib}
 
\end{document}